\newtheorem{theorem}{Theorem}[section]
\newtheorem{proposition}{Proposition}[section]
\newtheorem{lemma}{Lemma}[section]
\newtheorem{corollary}{Corollary}[section]
\theoremstyle{definition}
\theoremstyle{remark}
\newtheorem{remark}{Remark}[section]
\DeclareMathOperator{\interior}{int}
\begin{document}


\setlength\parindent{0pt}
\setlength{\parskip}{0.5em}

\title{A Stefan-Sussmann theorem for normal distributions on manifolds with boundary}

\author{David Perrella}
\author{David Pfefferl{\'e}}
\author{Luchezar Stoyanov}
\affiliation{The University of Western Australia, 35 Stirling Highway, Crawley WA 6009, Australia}

\maketitle

\section{Abstract}
An analogue of the Stefan-Sussmann Theorem on manifolds with boundary is proven for normal distributions. These distributions contain vectors transverse to the boundary along its entirety. Plain integral manifolds are not enough to ``integrate" a normal distribution; the next best ``integrals" are so-called neat integral manifolds with boundary. The conditions on the distribution for this integrability is expressed in terms of adapted collars and integrability of a pulled-back distribution on the interior and on the boundary.

\section{Introduction}

The foundations of integrable distribution theory on manifolds can be summarised by the following four programs.
\begin{enumerate}
    \item Define integrability of a distribution.
    \item Ask what ``differential" conditions are equivalent to integrability.
    \item Determine the ``regularity" of integral manifolds. 
    \item Look at a notion of ``maximal" for integral manifolds and show that there is a correspondence with foliations.
\end{enumerate}

In the case of constant rank, the Frobenius Theorem addresses program two and is the main component of the foundations (although, as noted by Lavau \cite{Lavau(expositor)}, the precise historical list of contributors to the foundations of this theory do include authors before Frobenius). In the case of non-constant rank, it was the independent works of Stefan \cite{Stefan} and Sussmann \cite{Sussmann} which gave rise to the completion of the foundations; culminating in the Stefan-Sussmann Theorem (which again addresses program two). What appears absent from the literature is an analogous theory on manifolds with boundary, where the integral manifolds become, in general, manifolds with boundary.

To address this, the present paper takes on the analogous first and second programs for \emph{normal distributions} on manifolds with boundary. Normal distributions are those which contain vectors transverse to the boundary along its entirety. Our analogue of integrability is what we call \emph{neat integrability} and the differential conditions constitute our main result, namely a Stefan-Sussmann theorem for normal distributions.

In a future paper, we will use the results obtained here to complete the remaining two analogous programs in the foundations. This will provide means of guaranteeing examples of \emph{foliations by manifolds with boundary} on manifolds with boundary. A general notion of these foliations were considered by Noakes \cite{Noakes} where he generalised Bott's Theorem \cite{Bott} as well as demonstrate how these foliations may be used to guarantee (ordinary) foliations.

This paper is structured as follows. Distributions and the Stefan-Sussmann Theorem are first reviewed in section~\ref{sec:review_stefan-Sussmann}. We then discuss normal distributions and state our main result in section \ref{sec:norm_dists_and_main_result}. Lastly, we give a proof of this result in section \ref{sec:Proof}.

\section{Review of distributions and the Stefan-Sussmann Theorem}
\label{sec:review_stefan-Sussmann}
The definitions of the standard terms we use from smooth manifold theory coincide with that of Lee's book \cite{Lee} and of Rudolf and Schmidt's book \cite{RudolphSchmidt}. In particular, a submanifold with or without boundary in general is not assumed to be embedded, only immersed. We compile the relevant theory of distributions from Rudolf and Schmidt's book \cite{RudolphSchmidt} using our own terminology. Let $M$ be a smooth manifold.

For an arbitrary subset $D \subset TM$, a \textbf{\emph{$\bm{D}$-section}} is a vector field $X$ on $M$ such that for all $p \in M$, $X|_p \in D$. We also denote $D_p = D \cap T_pM$ for $p \in M$. With this, $D$ is called a \textbf{\emph{distribution on $\bm{M}$}} if for all $p \in M$, $D_p$ is a vector subspace of $T_pM$ and for any $v \in D_p$, there exists a $D$-section $X$ with $X|_p = v$. The \textbf{\emph{rank of $\bm{D}$}} is the map $r : M \to \mathbb{N}_0$ given by $r(p) = \dim D_p$. For a smooth map $f : S \to M$ between manifolds, we set $f^*D = (Tf)^{-1}(D)$ and call $f^*D$ the \textbf{\emph{pullback of $\bm{D}$ on to $\bm{S}$}} (where $Tf$ denotes the tangent map).

For the following, let $D$ be a distribution on $M$ of rank $r$. An \textbf{\emph{integral manifold}} of $D$ is a connected submanifold $I$ of $M$ such that, for $p \in I$, $Ti|_p(T_pI) = D_p$ ($i : I \subset M$). We say that $D$ is \textbf{\emph{integrable}} if through every point of $M$, there passes an integral manifold of $D$.

We now introduce the definitions needed to state the Stefan-Sussmann Theorem. To this end, a \textbf{\emph{local $D$-section}} is a local section $X : U \to TM$ (where $U$ is an open submaifold of $M$) with $X|_p \in D$ for $p \in U$. A distribution $D$ on $M$ is said to be:
\begin{enumerate}
    \item \textbf{\emph{involutive}} if its $D$-sections form a Lie subalgebra of the space of vector fields on $M$,
    \item \textbf{\emph{homogeneous}} (after Stefan \cite{Stefan}) if for all local $D$-sections $X$, for any point $(p,t) \in \mathcal{D}$ in the (maximal) domain of the flow $\psi^X : \mathcal{D} \to M$, one has (after identifying tangent spaces) $T\psi^X_t|_p(D_p) = D_{\psi^X(p,t)}$. 
\end{enumerate} 

The final definition pertains to certain nice charts for $D$. Let $p \in M$ and set $n = \dim M$. A chart $(U,\varphi)$ on $M$ is said to be \textbf{\emph{adapted to $\bm{D}$ at $\bm{p}$}} if the following holds (where $(\partial_1,...,\partial_n)$ denotes the local frame of $TM$ induced by $\varphi$).
\begin{enumerate}
    \item $\varphi(p) = 0$ and $\varphi(U) = (-\epsilon,\epsilon)^n$ for some $\epsilon > 0$,
    \item $\partial_1,...,\partial_{r(p)}$ are local $D$-sections,
    \item for all $c \in (-\epsilon,\epsilon)^{n-r(p)}$, denoting $U_c = \varphi^{-1}\left((-\epsilon,\epsilon)^{r(p)}\times \{c\}\right)$ if $r(p) < n$ and $U_c = U$ otherwise, the rank $r$ of $D$ is constant along $U_c$.
\end{enumerate}

We now state the Stefan-Sussmann Theorem (as formulated in Rudolf and Schmidt's book \cite{RudolphSchmidt}).

\begin{theorem}[Stefan-Sussmann Theorem]\label{Stefan-Sussmann}
Let $M$ be a manifold (without boundary) and $D$ a distribution on $M$. The following are equivalent.
\begin{enumerate}
    \item $D$ is integrable.
    \item $D$ is involutive and has constant rank along the integral curves of local $D$-sections.
    \item $D$ is homogenous.
    \item For every $p \in M$, there exists a chart adapted to $D$ at $p$.
\end{enumerate}
\end{theorem}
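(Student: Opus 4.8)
The plan is to prove the four statements equivalent through the cycle $(4) \Rightarrow (1) \Rightarrow (2) \Rightarrow (3) \Rightarrow (4)$, which keeps each implication in its most natural direction and isolates the analytic content in the middle two links. The implication $(4) \Rightarrow (1)$ is immediate: an adapted chart at $p$ exhibits the plaque $U_0 = \varphi^{-1}\bigl((-\epsilon,\epsilon)^{r(p)} \times \{0\}\bigr)$ as a connected embedded submanifold whose tangent space is everywhere spanned by the $D$-sections $\partial_1, \dots, \partial_{r(p)}$; so $TU_0 \subseteq D$ along $U_0$, and the constant-rank axiom forces $\dim D_q = r(p) = \dim T_q U_0$, giving equality and hence an integral manifold through each point.

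For $(1) \Rightarrow (2)$, fix $D$-sections $X, Y$ and a point $p$, and let $I$ be an integral manifold through $p$ with inclusion $i$. Because an immersion is locally an embedding and smoothness of a vector field is a local condition, $X$ and $Y$ are tangent to $I$ and restrict to smooth vector fields $X_I, Y_I$ that are $i$-related to them; naturality of the bracket then gives $[X,Y]|_p = Ti\bigl([X_I,Y_I]|_p\bigr) \in D_p$, so $D$ is involutive. For constant rank along an integral curve $c$ of a local $D$-section, near any $c(t_0)$ an integral manifold through $c(t_0)$ contains a piece of $c$ by uniqueness of integral curves, so $t \mapsto r(c(t))$ is locally constant and therefore constant.

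The step $(2) \Rightarrow (3)$ is the analytic heart. Fix a local $D$-section $X$ and a point $p$, write $c(t) = \psi^X(p,t)$ and $m = r(p)$, and choose $D$-sections $Y_1, \dots, Y_m$ independent at $p$, hence spanning $D_{c(t)}$ for $t$ near $0$. Involutivity gives $[X,Y_i]|_{c(t)} \in D_{c(t)} = \mathrm{span}\{Y_j|_{c(t)}\}$, so $[X,Y_i]|_{c(t)} = \sum_j a_{ij}(t) Y_j|_{c(t)}$ with $a_{ij}$ smooth in $t$, while the Lie-derivative identity $\tfrac{d}{dt} T\psi^X_{-t}\bigl(Y_i|_{c(t)}\bigr) = T\psi^X_{-t}\bigl([X,Y_i]|_{c(t)}\bigr)$ turns the curves $Z_i(t) = T\psi^X_{-t}\bigl(Y_i|_{c(t)}\bigr)$ in $T_pM$ into solutions of the linear system $\dot Z_i = \sum_j a_{ij}(t) Z_j$. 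As $Z_i(0) = Y_i|_p$ spans $D_p$, the restriction of this system to $D_p$ is well defined, so by uniqueness $Z_i(t) \in D_p$; thus $D_{c(t)} \subseteq T\psi^X_t(D_p)$, and equality follows since both have dimension $m$. The main obstacle is globalising this: the $Y_i$ need not stay independent along all of $c$, so the frame and the system are only local in $t$. I expect to handle it by showing the set of $t$ with $T\psi^X_t(D_p) = D_{c(t)}$ is open and closed in the flow interval — reselecting a spanning frame near each time and rerunning the ODE argument — whence it is the whole interval by connectedness.

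Finally, for $(3) \Rightarrow (4)$ I would build the adapted chart by flowing. With $k = r(p)$, choose $D$-sections $X_1, \dots, X_k$ independent at $p$ and an $(n-k)$-dimensional slice $S \ni p$ transverse to $D_p$, parametrised by $s$, and set
\[
\Phi(t_1, \dots, t_n) = \psi^{X_1}_{t_1} \circ \dots \circ \psi^{X_k}_{t_k}\bigl(s(t_{k+1}, \dots, t_n)\bigr).
\]
Transversality makes $T\Phi_0$ an isomorphism, so the inverse function theorem yields a diffeomorphism near $0$, and after rescaling $\varphi = \Phi^{-1}$ satisfies axiom (1). Differentiating $\Phi$ in $t_j$ for $j \le k$ expresses $\partial_j$ as a chain of flow differentials applied to $X_j \in D$, and homogeneity says each $T\psi^{X_i}_{t_i}$ preserves $D$, so $\partial_j$ is a $D$-section, giving axiom (2). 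For axiom (3), every point of a slice $U_c$ is reached from $s(c)$ by the flows of $X_1, \dots, X_k$, and homogeneity gives $\dim D_{\psi^X_t(q)} = \dim T\psi^X_t(D_q) = \dim D_q$, so $r$ is constant along $U_c$. Homogeneity thus feeds directly into both nontrivial axioms, leaving only the flow-box bookkeeping and the transversality check for the inverse function theorem.
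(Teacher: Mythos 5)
This statement is one the paper does not prove at all: it is quoted verbatim from Rudolph and Schmidt's book as background (the paper's own proofs all concern the boundary analogue, Theorem 2), so there is no in-paper argument to compare against and your proposal has to stand on its own. It does. Your cycle $(4)\Rightarrow(1)\Rightarrow(2)\Rightarrow(3)\Rightarrow(4)$ is the classical route: the plaque argument for $(4)\Rightarrow(1)$, tangency plus naturality of the bracket for $(1)\Rightarrow(2)$, the Lie-derivative ODE along the flow for $(2)\Rightarrow(3)$, and the flow-composition chart for $(3)\Rightarrow(4)$ are all sound, and each hypothesis is used where it must be (rank constancy to get the spanning frame and the dimension count in $(2)\Rightarrow(3)$, homogeneity for both axioms 2 and 3 of the adapted chart in $(3)\Rightarrow(4)$). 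The one gap you flag yourself --- that the frame $Y_1,\dots,Y_m$ only spans $D_{c(t)}$ for small $t$ --- is genuine but your proposed fix is the right one and does close it: the local ODE argument applies at \emph{every} point of the curve (re-choosing a spanning frame there), which gives both openness and, via the cocycle identity $\psi^X_{t_*}=\psi^X_{t_*-t_n}\circ\psi^X_{t_n}$, closedness of the set where $T\psi^X_t|_p(D_p)=D_{c(t)}$, so connectedness of the flow interval finishes it. Two small technical points you should make explicit in a final write-up: first, the paper defines involutivity via \emph{global} $D$-sections, while in $(2)\Rightarrow(3)$ you bracket with a \emph{local} $D$-section $X$, so you need a bump-function extension of $X$ (harmless, since $D_q$ is a subspace, $\chi X$ is again a $D$-section) before invoking involutivity; second, smoothness of the coefficients $a_{ij}(t)$ deserves a sentence (solve for them via an invertible $m\times m$ submatrix in a chart). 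Neither affects correctness.
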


The purpose of this paper is to prove an analogous result for what we are calling \emph{normal distributions on manifolds with boundary}. We now discuss this notion and state our main result.

\section{Normal distributions and the main result}
\label{sec:norm_dists_and_main_result}
The definition of distribution is precisely as before except on a manifold with boundary (including the notions of rank, pullback and involutivity). For the remainder of this section, let $D$ be a distribution on a manifold with boundary $M$.

A connected submanifold $L$ with boundary of $M$ is called an \textbf{\emph{integral manifold with boundary of $\bm{D}$}} if for $p \in L$, $Tl|_p(T_pL) = D_p$ ($l : L \subset M$). We will say that $D$ is \textbf{\emph{integrable by boundaries}} if through every point of $M$, there passes an integral manifold with boundary of $D$.

The distribution $D$ is said to be \textbf{\emph{normal}} if for every $p \in \partial M$, there exists a vector in $D_p$ transverse to $\partial M$. To introduce our analogue of the Stefan-Sussmann Theorem (Theorem \ref{Stefan-Sussmann}), we introduce some definitions. For the following, let $D$ be a normal distribution on $M$ of rank $r$.

An integral manifold with boundary $L$ of $D$ is said to be \textbf{\emph{neat}} if $\partial L \subset \partial M$. Accordingly, $D$ is said to be \textbf{\emph{neatly integrable by boundaries}} if through any point in $M$, there passes a neat integral manifold of $D$. 

We now move to analogues of the hypotheses within the Stefan-Sussmann theorem. For the remaining of this section, we assume that $\partial M \neq \emptyset$. The analogues we present will be in terms of some collar; without mentioning any general vector field. This simplification is made by the hypotheses expressing integrability of some constructed distributions on the manifolds $\interior M$ and $\partial M$. One can express our main result purely in terms of vector fields by applying the (standard) Stefan-Sussmann Theorem (Theorem \ref{Stefan-Sussmann}) and the Boundary Flowout Theorem (Theorem \ref{Boundary Flowout Theorem}).

Recall, a \textbf{\emph{collar}} $\Sigma : \partial M \times [0,1) \to M$ is an embedding onto a neighbourhood of $\partial M$ in $M$ such that $\Sigma(p,0) = p$ for all $p \in \partial M$. We will call a collar $\Sigma : \partial M \times [0,1) \to M$ a \textbf{\emph{$\bm{D}$-precollar}} if, for all $(p,t) \in \partial M \times [0,1)$,
\begin{equation*}
r(\Sigma(p,t)) = r(p) \text{ and } \Sigma(p,\cdot )'(t) \in D_{\Sigma(p,t)}. 
\end{equation*}
The normal distribution $D$ will be called \textbf{\emph{precollared}} if a $D$-precollar exists. A collar $\Sigma : \partial M \times [0,1) \to M$ will called a \textbf{\emph{$D$-collar}} if, denoting $\jmath : \partial M \subset M$, for all $(p,t) \in \partial M \times [0,1)$,
\begin{equation*}
T\Sigma(\cdot,t)|_p((\jmath^*D)_p) + T\Sigma(p,\cdot )|_t(T_t[0,1)) = D_{\Sigma(p,t)}.
\end{equation*}
Accordingly, the normal distribution $D$ will be called \textbf{\emph{collared}} if a $D$-collar exists.

Our analogue of adapted charts is closest to the original. However, to ensure these charts account for the distribution being normal, we must use the local vector field generated by the last coordinate of the chart; since this is the only local vector field which is transverse to the boundary. We have arranged this by using the last few coordinates of the chart as opposed to the first few.

Let $p \in M$ and set $n = \dim M$. Then, a chart $(U,\varphi)$ is said to be \textbf{\emph{neatly adapted to $\bm{D}$ at $\bm{p}$}} if the following holds.
\begin{enumerate}
    \item $\varphi(p) = 0$ and $\varphi(U) = (-\epsilon,\epsilon)^n$ if $p \in \interior M$ and $\varphi(U) = (-\epsilon,\epsilon)^n \cap \mathbb{H}^n$ otherwise,
    \item $\partial_{n-r(p)+1},...,\partial_n$ are local $D$-sections,
    \item For all $c \in (-\epsilon,\epsilon)^{n-r(p)}$, denoting $U_c = \varphi^{-1}\left(\{c\} \times (-\epsilon,\epsilon)^{r(p)}\right)$ if $r(p) < n$ and $U_c = U$ otherwise, the rank $r$ of $D$ is constant along $U_c$.
\end{enumerate}

With these definitions, we may present our main result.

\begin{theorem}\label{Normal Stefan-Sussmann}
Let $M$ be a manifold with boundary $\partial M \neq \emptyset$ and $D$ be a normal distribution of rank $r$ on $M$. Denote by $\imath : \interior M \subset M$ and $\jmath : \partial M \subset M$ the inclusions of the interior and the boundary into $M$. Then, the pullback $\imath^*D$ is a distribution of rank $r|_{\interior M}$ on $\interior M$ and $\jmath^*D$ is a distribution of rank $r|_{\partial M}-1$ on $\partial M$. Moreover, the following are equivalent.
\begin{enumerate}
    \item[S1.] $D$ is neatly integrable by boundaries.
    \item[S2.] $\imath^*D$ is integrable and $D$ is precollared.
    \item[S3.] $\imath^*D$ and $\jmath^*D$ are integrable and $D$ is collared.
    \item[S4.] For every $p \in M$, there exists a chart neatly adapted to $D$ at $p$.
\end{enumerate}
\end{theorem}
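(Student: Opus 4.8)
The plan is to first settle the two preliminary assertions about the pullbacks, and then establish the four-fold equivalence through the cycle S4 $\Rightarrow$ S1 $\Rightarrow$ S3 $\Rightarrow$ S4 together with the bridge S2 $\Leftrightarrow$ S3, leaning on Theorem~\ref{Stefan-Sussmann} applied separately on $\interior M$ and on $\partial M$, and on Theorem~\ref{Boundary Flowout Theorem} to manufacture collars. For the pullbacks, $\imath$ is an open inclusion, so $(\imath^*D)_p = D_p$ and both the rank claim and the distribution property are inherited verbatim. For $\jmath$ one has $(\jmath^*D)_p = D_p \cap T_p\partial M$; normality says $D_p \not\subset T_p\partial M$, so this intersection is a hyperplane in $D_p$ and its dimension is $r(p)-1$. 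That $\jmath^*D$ is a genuine distribution I would check by hand: given $v \in (\jmath^*D)_p$, take a $D$-section $X$ with $X|_p = v$ and a $D$-section $N$ transverse to $\partial M$ near $p$ (obtained by extending the normal vector furnished by normality, transversality being an open condition), subtract the unique multiple of $N$ that lands $X$ in $T\partial M$ at each nearby boundary point, and cut off with a bump function; the result is a $\jmath^*D$-section equal to $v$ at $p$.

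For S4 $\Rightarrow$ S1, the slice $U_0$ of a chart neatly adapted to $D$ at $p$ is the required neat integral manifold: condition (3) makes $\partial_{n-r(p)+1},\dots,\partial_n$ span $D$ along $U_0$, and the half-space model forces the manifold boundary of $U_0$ into $\{x^n = 0\} = \partial M \cap U$, which is exactly neatness (for $p \in \interior M$ the slice is a full cube with empty boundary, vacuously neat). For S1 $\Rightarrow$ S3, restricting a neat integral manifold through an interior point to its manifold interior gives an integral manifold of $\imath^*D$, while the boundary $\partial L$ of a neat integral manifold through $p \in \partial M$ satisfies $T_q \partial L = D_q \cap T_q\partial M = (\jmath^*D)_q$ and is thus an integral manifold of $\jmath^*D$; this yields both integrabilities. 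For the $D$-collar I would assemble a global inward-pointing transverse $D$-section $N$ by a partition of unity (convex combinations of inward $D$-vectors stay inward and in $D$), flow it through Theorem~\ref{Boundary Flowout Theorem} to a collar $\Sigma$, and use that $N$ is tangent to the neat integral manifolds (as $N \in D$) so that its flow preserves $D$ and carries $\jmath^*D$ onto the complementary directions, giving the collar identity.

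For S3 $\Rightarrow$ S4 (closing the cycle), the construction is routine: since $\jmath^*D$ is integrable, Theorem~\ref{Stefan-Sussmann} supplies a chart on $\partial M$ adapted to $\jmath^*D$ near $p$, and pushing it through the $D$-collar $\Sigma$, with the collar time as the final coordinate, produces a chart on $M$ whose last $r(p)$ coordinate fields are $D$-sections spanning $D$ (by the collar identity) and along which $r$ is constant; the half-space appears from the constraint that the collar time is nonnegative. For the easy half of the bridge, S3 $\Rightarrow$ S2, a $D$-collar is a fortiori a $D$-precollar: the collar identity gives $\Sigma(p,\cdot)'(t) \in D$, and since $\Sigma$ is an embedding the two summands in the collar identity are images of complementary subspaces under an injective differential, so $D_{\Sigma(p,t)}$ is a direct sum of dimension $(r(p)-1)+1 = r(p)$, forcing $r$ to be constant along collar curves; $\imath^*D$ integrability is already assumed.

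The remaining and genuinely hard implication is S2 $\Rightarrow$ S3, which I expect to be the main obstacle. Here one is given only that a single transverse direction lies in $D$ and that the rank is constant along the precollar curves, and must upgrade this to the full collar normal form while simultaneously deriving integrability of $\jmath^*D$. Concretely, one must show that the integral manifolds of $\imath^*D$ close up smoothly as they approach $\partial M$, assembling into neat integral manifolds whose boundaries integrate $\jmath^*D$. The difficulty is controlling $D$, and the brackets of $D$-sections, in the limit as the precollar parameter $t \to 0$, where a priori the distribution could degenerate; the argument should combine the rank constancy along precollar curves with Theorem~\ref{Boundary Flowout Theorem} to guarantee that the interior normal form extends smoothly across the boundary, thereby recovering the collar identity of S3.
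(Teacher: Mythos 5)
Most of your proposal retraces the paper's own route: the pullback statements (your hyperplane/bump-function argument is essentially the paper's Proposition~\ref{pullback dist to codim 1} specialised via a collar), S4 $\Rightarrow$ S1 by taking the chart slice through $p$, S1 $\Rightarrow$ S3 by passing to $\interior L$ and $\partial L$ and flowing an inward-pointing $D$-section, S3 $\Rightarrow$ S4 by pushing an adapted chart on $\partial M$ through the $D$-collar, and your dimension count showing that collared implies precollared (which the paper dismisses as trivial but you correctly justify). The genuine gap is the one implication you explicitly leave open: the arrow out of S2 back into the cycle. You prove S3 $\Rightarrow$ S2 but for S2 $\Rightarrow$ S3 you only describe the difficulty (``integral manifolds must close up smoothly as they approach $\partial M$'', ``controlling the brackets of $D$-sections as $t \to 0$'') and express the hope that rank constancy plus Theorem~\ref{Boundary Flowout Theorem} ``should'' deliver it. Without that arrow, S2 is implied by the other conditions but does not imply them, so the four-fold equivalence is not established. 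Moreover, the instinct to control brackets is a sign the intended argument is off track: brackets are a constant-rank (Frobenius) device, and the paper's argument for this step never touches them.

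The paper closes this gap by proving S2 $\Rightarrow$ S1 through a \emph{product analysis} on the precollar (Lemmas~\ref{pullback dist is dist}--\ref{finale for product analysis} and Lemma~\ref{S2 implies S1 in Normal Stefan-Sussmann}). Transport $D$ by the precollar diffeomorphism to a distribution $D'$ on $\partial M \times [0,1)$ for which $\frac{\partial}{\partial t}$ is a $D'$-section, the rank is constant along the curves $t \mapsto (p,t)$, and the restriction to the interior is integrable. On $\partial M \times (0,1)$, Proposition~\ref{max contain int curves} forces every maximal integral manifold to contain each vertical curve it meets, hence to have the form $I_1 \times (0,1)$; slicing at $t = 1/2$ (Proposition~\ref{pullback dist to codim 1}) yields an integrable distribution $D_1$ on $\partial M$ whose pullback $\pi_1^*D_1$ agrees with $D'$ on the interior. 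The candidate $E = \pi_1^*D_1$ on all of $\partial M \times [0,1)$ is then a distribution which is \emph{manifestly} neatly integrable (its leaves are the products $I_1 \times [0,1)$) and has constant rank along the vertical curves. The decisive device is a uniqueness lemma (Corollary~\ref{interior determines dist}, proved by a local coframe/limit argument): two distributions on $\partial M \times [0,1)$ that agree on the interior and have constant rank along the vertical curves are equal. Hence $D' = E$ is neatly integrable, and transporting back through the collar (using integrability of $\imath^*D$ away from it) gives S1. So the idea you were missing is that one never shows interior leaves ``extend across'' the boundary by a limiting or bracket argument; one instead exhibits an explicitly neatly integrable distribution that must coincide with $D$ by uniqueness from interior data plus rank constancy.
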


Using Theorem \ref{Normal Stefan-Sussmann} and Proposition \ref{preprecollar} (found in the next section), a Frobenius theorem may be formulated as follows.

\begin{corollary}\label{Normal Frobenius}
Let $M$ be a manifold with boundary $\partial M \neq \emptyset$ and $D$ be a normal distribution of constant rank. Then the following are equivalent.
\begin{enumerate}
    \item[F1.] $D$ is neatly integrable by boundaries.
    \item[F2.] $D$ is involutive.
    \item[F3.] For every $p \in M$, there exists a chart neatly adapted to $D$ at $p$.
\end{enumerate}
\end{corollary}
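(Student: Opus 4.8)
The plan is to deduce everything from Theorem \ref{Normal Stefan-Sussmann}. The first observation is that the conditions F1 and F3 are literally the conditions S1 and S4 of that theorem, so the equivalence F1 $\iff$ F3 is immediate. All the real work is therefore to splice in F2, and for this I would aim to prove F2 $\iff$ S2, after which the chains F1 $\iff$ S1 $\iff$ S2 $\iff$ F2 and F1 $\iff$ S4 $\iff$ F3 close the argument. The point of restricting to constant rank is that it trivialises the analytic subtleties of the general theorem: $\imath^*D$ becomes a constant-rank distribution on the boundaryless manifold $\interior M$, and $D$ is a genuine closed embedded smooth subbundle of $TM$.

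To reduce S2 to a single clause, I would first invoke Proposition \ref{preprecollar} to conclude that the normal, constant-rank distribution $D$ is precollared; this removes the precollaredness requirement from S2, leaving S2 equivalent to the single statement that $\imath^*D$ is integrable. Since $D$ has constant rank $r$, the distribution $\imath^*D$ has constant rank $r$ on $\interior M$ (as recorded in the theorem), so the classical Frobenius theorem---obtained by specialising Theorem \ref{Stefan-Sussmann}, where the ``constant rank along integral curves'' clause is automatic---gives that $\imath^*D$ is integrable if and only if $\imath^*D$ is involutive.

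It then remains to establish the bridge that $D$ is involutive if and only if $\imath^*D$ is. For the forward direction I would use that a constant-rank $D$ is a smooth subbundle admitting global sections $X_1,\dots,X_N$ spanning each fibre $D_p$; involutivity of $D$ means $[X_a,X_b]$ is a $D$-section for all $a,b$, and restricting these relations to the open submanifold $\interior M$ shows every bracket of local $\imath^*D$-sections (expanded in the $X_a$) lands in $\imath^*D$. For the converse, assuming $\imath^*D$ involutive, I would use that $\interior M$ is dense in $M$ and that $D$ is a closed embedded subbundle of $TM$: for $D$-sections $X,Y$ on $M$ the restrictions are $\imath^*D$-sections, so $[X,Y]|_p\in D_p$ on the dense set $\interior M$, and continuity together with closedness of $D$ forces $[X,Y]|_p\in D_p$ for every $p\in\partial M$ as well. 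Chaining these equivalences yields that F2 holds iff $\imath^*D$ is involutive, iff $\imath^*D$ is integrable, iff S2 holds, iff S1 holds, which is exactly F1; combined with F1 $\iff$ F3 this completes the proof.

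The step I expect to be the main obstacle is this last bridge at the boundary, namely passing involutivity between $D$ and $\imath^*D$: the forward direction must be handled through global spanning sections (local $\imath^*D$-sections need not extend to $D$-sections across $\partial M$), while the converse rests on the density of $\interior M$ and the closedness of the subbundle $D$, both of which genuinely use the constant-rank hypothesis. As a consistency check one can also verify F1 $\Rightarrow$ F2 directly: the $D$-sections restrict to vector fields tangent to any neat integral manifold $L$ through a given point, and since vector fields tangent to $L$ are closed under the Lie bracket (including along $\partial L\subset\partial M$), the bracket of two $D$-sections again lies in $D$.
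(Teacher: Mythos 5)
Your proposal is correct and follows exactly the route the paper prescribes: the paper offers no written proof of Corollary \ref{Normal Frobenius}, merely noting that it follows from Theorem \ref{Normal Stefan-Sussmann} together with Proposition \ref{preprecollar} (which, under constant rank, makes precollaredness automatic, so S2 collapses to integrability of $\imath^*D$), and your argument fleshes this out, including the involutivity bridge between $D$ and $\imath^*D$ that the paper leaves implicit. One cosmetic simplification: in the forward direction of that bridge you need not invoke finitely many global spanning sections (a fact requiring its own justification); by the paper's definition of a distribution, a basis of $D_q$ extends to global $D$-sections, which by constant rank form a local frame for $D$ near $q$, and expanding local $\imath^*D$-sections in such frames already gives the Leibniz-bracket computation you describe.
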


We will now prove Theorem \ref{Normal Stefan-Sussmann}.

\section{Proof of main result}
\label{sec:Proof}
Our proof will establish five implications directly. Although this is not the optimal amount (four is optimal), three of our chosen implications are trivial, making for only two substantial implications. We will now establish some preliminary observations of distributions which we used throughout in proving these implications.

\subsection{Preliminary observations of distributions and of normal distributions}

We first state two additional results found in Rudolf and Schmidt's book \cite{RudolphSchmidt} for later use. The first is on the ``regularity" of integral manifolds of distributions. They are in fact \emph{weakly embedded}. We introduce some notation to express this and related notions to simplify subsequent proofs.

Let $A,B,M,N$ be manifolds with boundary and $f : M \to N$ be a map between manifolds with boundary. 
\begin{enumerate}
    \item If $A \subset M$, the restriction is denoted $f|_A : A \to N$ as usual.
    \item If $f(M) \subset B$, we will write the induced map as $f|^B : M \to B$. We call $f|^B$ the \textbf{\emph{co-restriction of $\bm{f}$ to $B$}} if $B \subset N$ and call it the \textbf{\emph{co-extension of $\bm{f}$ to $\bm{B}$}} if $B \supset Y$. If $f(A) \subset B$, we define $f|_A^B = (f|_A)^B$ for succinctness.
    \item If $A$ is an submanifold with boundary in $M$ and $f$ is an injective immersion and $B$ is the unique submanifold with boundary of $N$ such that $f|_A^B$ is a diffeomorphism, we denote $f(A) = B$.
    \item If $M$ is a manifold and $A$ is a submanifold of $M$, it is said that \textbf{\emph{$\bm{A}$ is weakly embedded in $\bm{M}$}} if for any smooth map $F : S \to M$ between manifolds such that $F(S) \subset A$, we have that the co-restriction $F|^A$ is also smooth.
    \item We will denote by $\text{Pts}(A)$ the set of points of $A$ when it is necessary for distinction.
\end{enumerate}

The following is Proposition 3.5.15 in Rudolf and Schmidt's book \cite{RudolphSchmidt}.

\begin{proposition}\label{boundarylessweakembedded}
Let $M$ be a manifold and $D$ an integrable distribution on $M$. Then, all integral manifolds of $D$ are weakly embedded in $M$.
\end{proposition}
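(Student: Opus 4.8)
The plan is to exploit the adapted charts provided by integrability together with the second countability of the integral manifold. Throughout, fix an integral manifold $I$ with inclusion $i : I \subset M$ (an injective immersion) and a smooth map $F : S \to M$ with $F(S) \subset \mathrm{Pts}(I)$; I must show the co-restriction $F|^I : S \to I$ is smooth, which is a local question in $S$.

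First I would record a structural observation about $I$. Since $Ti|_p(T_pI) = D_p$ for all $p \in I$, the rank of $D$ is constant along $I$, equal to $k := \dim I$. Given any $q \in I$, integrability and the Stefan--Sussmann Theorem (Theorem~\ref{Stefan-Sussmann}) provide a chart $(U,\varphi)$ adapted to $D$ at $q$ with coordinate functions $x^1,\dots,x^n$; let $\tau := (x^{k+1},\dots,x^n) : U \to \mathbb{R}^{n-k}$ be the transverse coordinate, whose fibres are the slices $U_c$. Because $\partial_1,\dots,\partial_k$ are $D$-sections on $U$, at each point $p \in \mathrm{Pts}(I) \cap U$ the $k$-plane $\mathrm{span}(\partial_1|_p,\dots,\partial_k|_p)$ lies in $D_p$ and, by the constant rank just noted, equals it; consequently $T\tau(D_p) = 0$ there.

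The crux --- and the step I expect to be the main obstacle --- is to upgrade this infinitesimal statement into the global assertion that $\tau(\mathrm{Pts}(I) \cap U)$ is countable. Here I would use that the point set $i^{-1}(U)$ is open in $I$, so by second countability of $I$ it has at most countably many connected components $V$; on each, $\tau \circ i|_V$ is smooth with vanishing differential (by the previous paragraph), hence constant, so $i(V)$ sits inside a single slice. Thus $\tau(\mathrm{Pts}(I) \cap U)$ is precisely the (countable) set of these constant values.

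With this in hand the conclusion is routine. To check smoothness of $F|^I$ near $s_0 \in S$, set $q = F(s_0)$, take an adapted chart at $q$ as above, and choose a connected neighbourhood $W$ of $s_0$ with $F(W) \subset U$. Then $\tau \circ F|_W$ is continuous on the connected set $W$ with image in the countable set $\tau(\mathrm{Pts}(I) \cap U)$; since a countable connected subset of $\mathbb{R}^{n-k}$ is a single point, $\tau \circ F|_W$ is constant and $F(W)$ lies in one slice $P$. Writing $\mathrm{Pts}(I) \cap P = \bigsqcup_V i(V)$ over the components $V$ of $i^{-1}(U)$ contained in $P$, each $i(V)$ is open in the embedded slice $P$, so these pieces are clopen in $\mathrm{Pts}(I) \cap P$; as $F(W)$ is connected it lies in a single $i(V_0)$. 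Finally $i|_{V_0}$ is a bijective local diffeomorphism onto the open subset $i(V_0)$ of $P$, hence a diffeomorphism, and near $s_0$ the map $F|^I$ agrees with $(i|_{V_0})^{-1} \circ F$, a composite of smooth maps. As $s_0$ and $q$ were arbitrary, $F|^I$ is smooth and $I$ is weakly embedded.
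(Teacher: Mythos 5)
Your proof is correct. Note, however, that the paper does not prove this proposition at all: it is stated as a quotation of Proposition 3.5.15 from Rudolph and Schmidt's book \cite{RudolphSchmidt}, so there is no in-paper argument to compare against. Your route --- adapted charts supplied by Theorem~\ref{Stefan-Sussmann}, second countability of $I$ giving countably many components of $i^{-1}(U)$ each of which is killed by the transverse coordinate $\tau$ and hence lies in a single slice, then connectedness forcing $F$ locally into a single slice and further into a single component, where $i$ restricts to a diffeomorphism --- is essentially the standard argument used in that reference (and in Lee's treatment of leaves of foliations), and you handle the delicate points correctly: openness of $i(V)$ in the embedded slice via the equal-dimension immersion being a local diffeomorphism, clopenness of the disjoint pieces of $\mathrm{Pts}(I)\cap P$, and smoothness of co-restrictions to embedded slices before inverting $i|_{V_0}$.
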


Another important result found in Rudolf and Schmidt's book \cite{RudolphSchmidt} is on the existence of maximal integral manifolds. An integral manifold $I$ of a distribution $D$ on a manifold $M$ is \textbf{\emph{maximal}} if, for any integral manifold $H$ of $D$ with a point in common with $I$, $H$ is an open submanifold of $I$. The following is Theorem 3.5.17 in Rudolf and Schmidt's book \cite{RudolphSchmidt} adapted to our context.

\begin{theorem}\label{boundarylessmaximal}
Let $M$ be a manifold and $D$ an integrable distribution on $M$. Then, through every point of $M$, there passes a unique maximal integral manifold of $D$.
\end{theorem}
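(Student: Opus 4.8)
The plan is to obtain existence by an explicit construction of the maximal integral manifold through a given point $p$ as a \emph{leaf}, and then to read uniqueness off directly from the definition of maximality. The two levers I would use are the adapted charts furnished by Theorem~\ref{Stefan-Sussmann}, which pin down the local shape of integral manifolds, and the weak embeddedness of integral manifolds from Proposition~\ref{boundarylessweakembedded}, which governs how overlapping integral manifolds match up. A preliminary remark I would record is that the rank $r$ is constant on any integral manifold $I$, since $\dim D_q = \dim T_q I = \dim I$ there; consequently $r$ is constant on each leaf, so the leaf will be a manifold of well-defined dimension.

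First I would establish local coherence. Fix a chart $(U,\varphi)$ adapted to $D$ at a point $q$ and consider the slice $U_0 = \varphi^{-1}((-\epsilon,\epsilon)^{r(q)}\times\{0\})$ through $q$; it is an integral manifold of dimension $r(q)$. I claim any connected integral manifold $I$ with $q \in \text{Pts}(I)$ agrees with $U_0$ near $q$. This is where weak embeddedness enters: shrinking $I$ about $q$ so that its image lies in $U_0$, the co-restriction of the inclusion $I \hookrightarrow M$ to $U_0$ is smooth by Proposition~\ref{boundarylessweakembedded}, and since both sides carry the tangent space $D_q$, this co-restriction is a local diffeomorphism onto an open subset of $U_0$. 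Hence any two integral manifolds through a common point coincide on a neighbourhood of that point.

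For existence I would fix $p$ and let $L$ be the set of all $q$ joined to $p$ by a finite chain of integral manifolds $I_1,\dots,I_k$ with $p \in \text{Pts}(I_1)$, $q \in \text{Pts}(I_k)$ and $\text{Pts}(I_j)\cap \text{Pts}(I_{j+1})\neq\emptyset$; this is an equivalence relation, and $L$ is its class through $p$. I would topologise $L$ and equip it with a smooth structure by taking the plaques $U_0$ above (one through each point of $L$, from an adapted chart) as distinguished opens with the $\varphi$-slice coordinates as charts. The local coherence just proved guarantees the transition maps are smooth and the structure independent of choices, and since $r$ is constant on $L$ all plaques share the dimension $r$. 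By construction $L \hookrightarrow M$ is an injective immersion with $T_q L = D_q$, so $L$ is an integral manifold of $D$.

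Maximality and uniqueness come last. If $H$ is an integral manifold meeting $L$, then every point of $H$ is chain-connected to $p$, so $\text{Pts}(H)\subset \text{Pts}(L)$, and local coherence makes $H$ an open submanifold of $L$; this is exactly maximality. For uniqueness, two maximal integral manifolds $L,L'$ through $p$ each meet the other, so each is an open submanifold of the other, forcing $\text{Pts}(L)=\text{Pts}(L')$ with identical smooth structure, hence $L=L'$. The step I expect to be the main obstacle is the middle one: checking that the plaque charts really define a (Hausdorff, second-countable) smooth manifold with smooth transitions, rather than a merely set-theoretic gluing. This is precisely the work that Proposition~\ref{boundarylessweakembedded} is there to do, converting the overlaps of plaques coming from different integral manifolds into a coherent smooth atlas.
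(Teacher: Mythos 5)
First, a point of comparison: the paper does not prove this statement at all --- Theorem~\ref{boundarylessmaximal} is imported verbatim as Theorem 3.5.17 of Rudolph and Schmidt's book and stated without proof. So your proposal cannot be matched against an in-paper argument; it has to stand on its own. Structurally it does: the plaque/leaf construction you outline (rank constant on connected integral manifolds; adapted charts from Theorem~\ref{Stefan-Sussmann} supplying slices $U_0$ that are integral manifolds; weak embeddedness from Proposition~\ref{boundarylessweakembedded} forcing two integral manifolds through a common point to agree near it; chain-equivalence classes glued from plaques; maximality and uniqueness then formal) is the standard and correct route to this theorem.

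Two steps need more than you give them. The smaller one: ``shrinking $I$ about $q$ so that its image lies in $U_0$'' is asserted, but this set-theoretic containment is exactly where non-constant rank bites, and weak embeddedness does not provide it. It is fillable from ingredients you already recorded: the rank is constant, equal to $R = r(q)$, on $I$; on $U$ the fields $\partial_1,\ldots,\partial_R$ are local $D$-sections, so for $x \in I \cap U$ a dimension count gives $D_x = \text{span}(\partial_1|_x,\ldots,\partial_R|_x)$; hence $\varphi^{R+1},\ldots,\varphi^{n}$ have vanishing differential along a connected neighbourhood $W$ of $q$ in $I$, so they vanish identically there, i.e.\ $W \subset U_0$. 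The genuine gap is second countability of the leaf $L$. You correctly flag it as the main obstacle, but then claim Proposition~\ref{boundarylessweakembedded} is there to do this work: it is not --- weak embeddedness yields smoothness of the plaque transition maps and nothing about the global topology. Hausdorffness is indeed cheap (the plaque topology refines the subspace topology inherited from $M$), but second countability cannot be obtained by the usual foliation count (countable atlas, each leaf meeting each chart in countably many slices), because an adapted chart is adapted only \emph{at its centre}: a leaf through a non-central point of $U$ need not meet $U$ in slices of $U$ at all, so no countable family of adapted charts controls all leaves simultaneously. Without a separate argument your $L$ is only a connected, Hausdorff, locally Euclidean space with a smooth atlas, which is not yet a manifold in the paper's sense (compare the long line). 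One standard repair: your atlas makes $L$ a smooth object injectively immersed in $M$; pull back a Riemannian metric from $M$, observe that the induced length distance metrizes the leaf topology, so $L$ is metrizable, hence paracompact, and a connected paracompact locally Euclidean space is second countable. With that inserted, your construction goes through.
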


This gives rise to a convenient containment property of integral curves.

\begin{proposition}\label{max contain int curves}
Let $M$ be a manifold and $D$ an integrable distribution on $M$. Let $X$ be a $D$-section, $\gamma : (-\epsilon,\epsilon) \to M$ be an integral curve of $X$ and $I$ be a maximal integral manifold of $D$ with $\gamma \cap I \neq \emptyset$. Then, $\gamma \subset I$.
\end{proposition}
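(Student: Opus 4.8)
The plan is to fix some $t_0 \in (-\epsilon,\epsilon)$ with $\gamma(t_0) \in I$ (one exists since $\gamma \cap I \neq \emptyset$) and to show that the set $T = \{\, t \in (-\epsilon,\epsilon) : \gamma(t) \in I \,\}$ is all of $(-\epsilon,\epsilon)$. As $(-\epsilon,\epsilon)$ is connected and $T$ is nonempty, it suffices to prove that $T$ is both open and closed.

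The engine of both halves is the following local mechanism, which I would isolate first. Let $J$ be any integral manifold of $D$ with $\gamma(s_0) \in J$ for some $s_0$, and write $j : J \subset M$. Since $X$ is a $D$-section, $X|_q \in D_q = Tj|_q(T_qJ)$ for every $q \in J$, so $X$ is tangent to $J$. Because $J$ is weakly embedded in $M$ by Proposition \ref{boundarylessweakembedded}, this tangency upgrades to a genuinely smooth vector field $X_J$ on $J$ satisfying $Tj \circ X_J = X \circ j$. The integral curve $\sigma$ of $X_J$ through $\gamma(s_0)$ then makes $j \circ \sigma$ an integral curve of $X$ in $M$ starting at $\gamma(s_0)$, whence uniqueness of integral curves of the smooth vector field $X$ on $M$ forces $\gamma = j \circ \sigma$ near $s_0$, so that $\gamma(s) \in J$ for all $s$ close to $s_0$. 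Applying this with $J = I$ immediately shows that $T$ is open. This passage from tangency to smoothness is where I expect the only real subtlety to lie: $J$ is immersed, not embedded, and the point is that weak embeddedness of $J$ lifts to weak embeddedness of $Tj(TJ)$ in $TM$ (the tangent functor preserving the property), so the smooth map $X \circ j : J \to TM$, whose image lands in $Tj(TJ)$, co-restricts smoothly to $TJ$; this is exactly why Proposition \ref{boundarylessweakembedded} is indispensable.

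For closedness I would take $t_* \in (-\epsilon,\epsilon)$ in the closure of $T$ and set $q = \gamma(t_*)$. By Theorem \ref{boundarylessmaximal} there is a maximal integral manifold $J$ through $q$, and the mechanism above (now with this $J$ and $s_0 = t_*$) yields $\delta > 0$ with $\gamma((t_* - \delta, t_* + \delta)) \subset J$. Choosing $t_1 \in T$ within distance $\delta$ of $t_*$, the point $\gamma(t_1)$ lies in both $I$ and $J$. Since $I$ is a maximal integral manifold through $\gamma(t_1)$ and $J$ is a maximal integral manifold through $\gamma(t_1)$, the uniqueness clause of Theorem \ref{boundarylessmaximal} gives $I = J$. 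Hence $q = \gamma(t_*) \in J = I$, so $t_* \in T$ and $T$ is closed. Connectedness of $(-\epsilon,\epsilon)$ then forces $T = (-\epsilon,\epsilon)$, that is, $\gamma \subset I$, as required.
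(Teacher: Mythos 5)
Your proof is correct and takes essentially the same route as the paper's: both run an open--closed connectedness argument on $\gamma^{-1}(I)$, powered by the identical local mechanism (tangency of the $D$-section $X$ to a maximal leaf, the related vector field obtained via weak embeddedness as in Proposition \ref{boundarylessweakembedded}, and uniqueness of integral curves). The only difference is cosmetic: where you establish closedness via a limit point and the uniqueness clause of Theorem \ref{boundarylessmaximal}, the paper shows the complement $\gamma^{-1}(M \setminus I)$ is open by writing $M \setminus I$ as the union of the other maximal leaves --- the same appeal to Theorem \ref{boundarylessmaximal} in different clothing.
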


\begin{proof}
First let $H$ be a maximal integral manifold of $D$. We see that $X$ is tangent to $H$. Hence, there is a $h$-related vector field $\tilde{X}$ to $X$ where $h : H \subset M$ is the inclusion. Then consider $\gamma^{-1}(H)$. Let $t \in \gamma^{-1}(H)$. Then considering the integral curves of $\tilde{X}$, by uniqueness, there is a neighbourhood $(t-\delta,t+\delta)$ of $t$ for some $\delta > 0$ with $(t-\delta,t+\delta) \subset \gamma^{-1}(I)$. Hence, $\gamma^{-1}(I)$ is open in the interval $(-\epsilon,\epsilon)$.

By assumption $\gamma^{-1}(I) \neq \emptyset$. Hence, by the above, $\gamma^{-1}(I)$ is open and nonempty in $(-\epsilon,\epsilon)$. Now consider the compliment $(-\epsilon,\epsilon) \backslash \gamma^{-1}(I) = \gamma^{-1}(M \backslash I)$. By Theorem \ref{boundarylessmaximal}, replacing each $p \in M$ with the unique maximal integral manifold $L$ of $D$ containing $p$, we obtain a partition $\mathcal{C}$ of $M$ by the maximal integral manifolds of $D$. Hence, $M\backslash I = \cup_{H \in \mathcal{C}\backslash \{I\}} H$ so that,
\begin{equation*}
(-\epsilon,\epsilon) \backslash \gamma^{-1}(I) = \gamma^{-1}(M \backslash I) = \gamma^{-1}(\cup_{H \in \mathcal{C}\backslash \{I\}} H)  = \cup_{H \in \mathcal{C}\backslash \{I\}} \gamma^{-1}(H).
\end{equation*}
Hence, using the above again, $\gamma^{-1}(M \backslash I)$ is open in $(-\epsilon,\epsilon)$. Since $(-\epsilon,\epsilon)$ is connected, it follows that $\gamma^{-1}(I) = (-\epsilon,\epsilon)$. That is, $\gamma \subset I$.
\end{proof}

We now make some observations which focus on normal distributions. These will enable the use of collars with normal distributions as well as prove the opening statement of Theorem \ref{Normal Stefan-Sussmann}. We first state two results found in Lee's book for later use.

Let $M$ be a manifold with boundary. Let $v \in TM$ be a vector on the boundary $\partial M$. If $v$ is transverse to $\partial M$, following Lee \cite{Lee}, $v$ is said to be:
\begin{enumerate}
    \item \textbf{\emph{inward-pointing}} if there exists a curve $\gamma : [0,\epsilon) \to M$ for some $\epsilon > 0$ such that $\gamma'(0) = v$;
    \item \textbf{\emph{outward-pointing}} if there exists a curve $\gamma : (-\epsilon,0] \to M$ for some $\epsilon > 0$ such that $\gamma'(0) = v$.
\end{enumerate}

We will make use of the following, which is Proposition 5.41 from Lee's book \cite{Lee} and a useful corollary.

\begin{proposition}\label{trichotomy}
For $p \in \partial M$, $T_pM$ is partitioned by the set of tangent vectors, the set of inward-pointing vectors and the set of outward-pointing vectors.
\end{proposition}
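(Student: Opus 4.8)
The plan is to reduce the trichotomy to a sign condition on a single coordinate by passing to a boundary chart. Fix a chart $(U,\varphi)$ about $p$ with $\varphi(U) \subset \mathbb{H}^n$, $\varphi(p) = 0$, and induced frame $(\partial_1,\dots,\partial_n)$, so that the last coordinate $x^n$ defines the boundary: $\varphi(U \cap \partial M) = \varphi(U) \cap \{x^n = 0\}$ and $x^n \ge 0$ throughout $\varphi(U)$. Writing $v = \sum_i v^i\,\partial_i|_p \in T_pM$, I claim that $v$ is tangent to $\partial M$ exactly when $v^n = 0$, inward-pointing exactly when $v^n > 0$, and outward-pointing exactly when $v^n < 0$. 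Granting these three equivalences the result is immediate: the real number $v^n$ is either zero, positive, or negative and never two of these at once, so the three classes are pairwise disjoint and their union is all of $T_pM$, which is precisely the claimed partition.

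For tangency, note that the boundary chart restricts to a chart for $\partial M$ on $U \cap \partial M$, so the image of $T_p(\partial M)$ under the inclusion is $\{v : v^n = 0\}$; hence $v$ is tangent iff $v^n = 0$ and transverse iff $v^n \ne 0$. For the transverse cases I would argue each implication separately. If $v^n > 0$, the coordinate ray $\gamma(t) = \varphi^{-1}(tv^1,\dots,tv^n)$ has $n$-th coordinate $tv^n \ge 0$ for $t \ge 0$, so it stays in $\mathbb{H}^n$ and, after shrinking to $t \in [0,\epsilon)$, inside the open set $\varphi(U)$; this gives a curve into $M$ with $\gamma'(0) = v$, so $v$ is inward-pointing. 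The case $v^n < 0$ is symmetric on $t \in (-\epsilon,0]$ and yields an outward-pointing curve. Conversely, if $v$ is inward-pointing via $\gamma : [0,\epsilon) \to M$ with $\gamma'(0) = v$, the function $f(t) = x^n(\varphi(\gamma(t)))$ satisfies $f(t) \ge 0$ and $f(0) = 0$, whence its right derivative obeys $v^n = f'(0) \ge 0$; since an inward-pointing vector is transverse, $v^n \ne 0$ forces $v^n > 0$. The outward-pointing converse is identical with reversed signs.

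The genuinely delicate points are confined to the second paragraph: making the straight-line curves land in $M$ (handled by shrinking $\epsilon$ so the ray stays in $\varphi(U)$) and the one-sided derivative step, which uses that a function nonnegative on $[0,\epsilon)$ and vanishing at $0$ has nonnegative right derivative there. I expect this interface between the curve-based definitions of inward- and outward-pointing and the coordinate sign of $v^n$ — rather than any computation — to be the main, though still routine, obstacle; once the three equivalences are in place, the partition is purely a statement about the sign of a single real number.
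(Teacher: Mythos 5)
Your proof is correct. The paper does not actually prove this proposition — it is quoted verbatim as Proposition 5.41 of Lee's book \cite{Lee} — and your boundary-chart argument (tangent iff $v^n = 0$, inward-pointing iff $v^n > 0$, outward-pointing iff $v^n < 0$, with the one-sided derivative estimate handling the converse directions) is essentially the standard proof given there, so your proposal matches the intended argument.
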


\begin{corollary}
A vector $v \in TM$ on the boundary is tangent to $\partial M$ if and only if there exists a curve $\gamma : (-\epsilon,\epsilon) \to M$ for some $\epsilon > 0$ such that $\gamma'(0) = v$.
\end{corollary}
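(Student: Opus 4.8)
The plan is to prove the two implications separately, in each case leveraging Proposition \ref{trichotomy} together with the one-sided curve definitions of inward- and outward-pointing vectors; no analytic input beyond these is needed.

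For the forward direction, suppose $v \in T_pM$ (with $p \in \partial M$) is tangent to $\partial M$, meaning $v$ lies in the image $T\jmath|_p(T_p\partial M) \subset T_pM$ of the tangent map of the inclusion $\jmath : \partial M \subset M$. Since $\partial M$ is a manifold \emph{without} boundary, every tangent vector at $p$ is realised by a genuinely two-sided curve. I would therefore choose a smooth curve $\tilde{\gamma} : (-\epsilon,\epsilon) \to \partial M$ with $\tilde{\gamma}(0) = p$ and $\tilde{\gamma}'(0) = v$ (under the identification $T_p\partial M \cong T\jmath|_p(T_p\partial M)$), and then set $\gamma = \jmath \circ \tilde{\gamma}$. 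The chain rule gives $\gamma'(0) = T\jmath|_p(\tilde{\gamma}'(0)) = v$, so $\gamma : (-\epsilon,\epsilon) \to M$ is the required two-sided curve.

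For the backward direction, suppose there is a curve $\gamma : (-\epsilon,\epsilon) \to M$ with $\gamma'(0) = v$. By Proposition \ref{trichotomy}, $v$ lies in exactly one of the three disjoint classes: tangent, inward-pointing, or outward-pointing. I would argue by contradiction that $v$ cannot be transverse. Indeed, if $v$ were transverse, then restricting $\gamma$ to $[0,\epsilon)$ exhibits $v$ as inward-pointing, while restricting $\gamma$ to $(-\epsilon,0]$ exhibits $v$ as outward-pointing; this places $v$ simultaneously in the set of inward-pointing and the set of outward-pointing vectors, contradicting the disjointness asserted by the partition in Proposition \ref{trichotomy}. Hence $v$ is tangent to $\partial M$.

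The argument is short, and the only point requiring care — the main, essentially bookkeeping, obstacle — is the handling of the canonical identifications: one must interpret ``tangent to $\partial M$'' precisely as membership in $T\jmath|_p(T_p\partial M)$ and verify that the velocity of the included curve transforms correctly under $T\jmath|_p$. All the substantive content is already packaged inside the trichotomy (Lee's Proposition 5.41), so beyond these identifications the corollary reduces to the observation that a two-sided curve furnishes one-sided curves on both sides at once.
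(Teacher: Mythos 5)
Your proof is correct and is essentially the paper's own approach: the paper states this result as an immediate corollary of Proposition~\ref{trichotomy} (with no written proof), and your argument is precisely the intended spelling-out of that implication. Both directions are handled as the paper's setup demands---composing a two-sided curve in $\partial M$ with the inclusion $\jmath$ for the forward direction, and, for the converse, noting that the restrictions of $\gamma$ to $[0,\epsilon)$ and $(-\epsilon,0]$ would make a transverse $v$ simultaneously inward- and outward-pointing, contradicting the disjointness in the partition.
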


For collar constructions, we will frequently use the Boundary Flowout Theorem (Theorem 9.24 in Lee's book \cite{Lee}).

\begin{theorem}[Boundary Flowout Theorem]\label{Boundary Flowout Theorem}
Let $M$ be a manifold with boundary $\partial M \neq \emptyset$ and $N$ an inward-pointing vector field on $M$. Then, there exists a smooth $\delta : \partial M \to \mathbb{R}^+$ and embedding $\Phi : U \to M$ where $U$ is an open submanifold with boundary in $\partial M \times [0,\infty)$ given by,
\begin{equation*}
U = \{ (p,t) \in \partial M \times [0,\infty) : t < \delta(p)\},
\end{equation*}
such that $\Phi(p,\cdot) : [0,\delta(p)) \to M$ is an integral curve of $N$ starting at $p$ for $p \in \partial M$.
\end{theorem}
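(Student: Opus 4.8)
The plan is to realise the flowout as the restriction of an honest flow on a boundaryless manifold and to extract the embedding from the inverse function theorem together with a gluing argument. First I would enlarge $M$ to a boundaryless manifold $\hat M$ of the same dimension in which $M$ sits as a regular domain, so that $\partial M$ is an embedded hypersurface with $\interior M$ locally one of its two sides; such an $\hat M$ exists by the standard extension result for manifolds with boundary \cite{Lee}. I would then extend $N$ to a smooth vector field $\hat N$ on a neighbourhood of $M$ in $\hat M$ via a partition of unity, noting that since $N$ is inward-pointing along $\partial M$, the field $\hat N$ is transverse to the hypersurface $\partial M$ there. Let $\theta : \mathcal O \to \hat M$ denote the maximal flow of $\hat N$, with $\mathcal O \subset \mathbb R \times \hat M$ open and containing $\{0\} \times \hat M$.

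Next I would define the flowout map $\Phi(p,t) = \theta(t,p)$ on the open set $W = \{(p,t) \in \partial M \times [0,\infty) : (t,p) \in \mathcal O\}$, which contains $\partial M \times \{0\}$, and compute its differential along the boundary slice. Since $\Phi(\cdot,0) = \mathrm{id}_{\partial M}$, the differential $d\Phi_{(p,0)}$ restricts to the inclusion $T_p\partial M \hookrightarrow T_p\hat M$ on the $\partial M$-directions, while $\partial_t \mapsto \hat N_p = N_p$. Transversality of $N$ to $\partial M$ gives $N_p \notin T_p \partial M$, so $d\Phi_{(p,0)} : T_p\partial M \oplus \mathbb R \to T_p \hat M$ is an isomorphism and, by the inverse function theorem, $\Phi$ is a local diffeomorphism at each $(p,0)$. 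Choosing a boundary defining function $\rho$ for $M$ (with $\rho \ge 0$ on $M$, $\rho = 0$ on $\partial M$, and $d\rho_p(N_p) > 0$ by inward-pointing), one finds $\tfrac{d}{dt}\big|_{0}\,\rho(\Phi(p,t)) = d\rho_p(N_p) > 0$, so the flowout enters $\interior M$ for small $t > 0$; hence, near $\partial M \times \{0\}$, $\Phi$ is a local diffeomorphism into $M$ as a manifold with boundary, carrying $\partial M \times \{0\}$ onto $\partial M$.

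The substantial step, and the one I expect to be the main obstacle, is to pass from local injectivity to a single smooth $\delta : \partial M \to \mathbb R^+$ for which $\Phi$ is injective on $U = \{(p,t) : 0 \le t < \delta(p)\}$. Local diffeomorphisms are locally injective for free, and $\Phi$ is globally injective on the slice $\partial M \times \{0\}$; the difficulty is that $\partial M$ may be non-compact, so no uniform time bound need exist and distinct local patches could a priori collide under $\Phi$. I would handle this by exhausting $\partial M$ by compact sets $K_1 \subset K_2 \subset \cdots$, using compactness and the homeomorphism property of $\Phi|_{\partial M \times \{0\}}$ to obtain, for each $j$, a neighbourhood of $K_j$ together with a uniform $\epsilon_j > 0$ on which $\Phi$ is injective, and then assembling these bounds—shrinking across overlaps to rule out cross-patch collisions—into a continuous positive bound; a partition of unity subordinate to a locally finite refinement then yields a smooth $\delta$ whose graph region lies inside the union of the injectivity patches. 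I would simultaneously diminish $\delta$ so that the whole curve $\Phi(p,\cdot)|_{[0,\delta(p))}$ remains within $M$ and within the flow domain.

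Finally, with $\Phi$ injective and a local diffeomorphism on $U$, it is an open map and a homeomorphism onto its (open) image in $M$, hence a smooth embedding; and by construction each $\Phi(p,\cdot)$ is an integral curve of $\hat N$ lying in $M$, so an integral curve of $N$ starting at $p$, which is exactly the asserted conclusion. As an alternative to the global extension $\hat M$, I note that one can instead work entirely in boundary charts modelled on $\mathbb H^n$, extend $N$ across $\{x^n = 0\}$ locally, flow there, and glue the local flowouts using uniqueness of integral curves of $N$ in $M$; the differential computation and the injectivity/gluing step are identical, only the construction of $\theta$ being localised.
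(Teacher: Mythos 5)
The paper does not actually prove this statement: it is imported verbatim as Theorem 9.24 of Lee's book \cite{Lee} and used as a black box, so the only meaningful comparison is with the standard textbook proof. Your proposal reconstructs essentially that proof: realise $M$ as a regular domain in a boundaryless $\hat M$, extend $N$ to $\hat N$ by the extension lemma for vector fields on closed subsets, restrict the flow $\theta$ of $\hat N$ to $\partial M \times [0,\infty)$, check via the inverse function theorem that the flowout is a local diffeomorphism along the zero slice (your differential computation and the boundary defining function argument showing the flowout enters $\interior M$ are both correct), and then cut down by a smooth $\delta$. The architecture is sound and matches the cited source.

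The one soft spot is precisely the step you flag as the main obstacle, and there you make it harder than it is while leaving the key mechanism unstated. Your exhaustion-plus-partition-of-unity scheme rests on ``shrinking across overlaps to rule out cross-patch collisions''; for a flowout from a general embedded hypersurface (Lee's Theorem 9.20(c)) that collision problem is genuinely delicate and requires an explicit return-time argument, which your sketch does not supply. But in the boundary case collisions are impossible outright, by an argument you have already set up with $\rho$ and then failed to exploit. Suppose $\Phi(p_1,t_1) = \Phi(p_2,t_2)$ with both curves remaining in $M$ and, say, $t_1 \le t_2$; the flow law gives $p_1 = \theta(t_2 - t_1, p_2)$. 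If $t_2 > t_1$, the forward trajectory of $p_2 \in \partial M$ meets $\partial M$ again while staying in $M$; at the first return time $\tau$ the function $\rho \circ \theta(\cdot, p_2)$ is positive on $(0,\tau)$ and zero at $\tau$, forcing a nonpositive derivative there, contradicting $d\rho(N) > 0$ at boundary points. Hence $t_1 = t_2$, and injectivity of $\theta_{t_1}$ gives $p_1 = p_2$. So $\Phi$ is injective on the entire region where the curves stay in $M$ and in the flow domain, and the only job of $\delta$ is to arrange that containment: the escape-time function is positive and lower semicontinuous (a flow-box argument handles uniformity near $t=0$), and any positive lower semicontinuous function admits a smooth positive minorant by a partition of unity --- no compact exhaustion or overlap bookkeeping is needed. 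Note also that $\Phi$ is an immersion at every $(p,t)$, not only at $t = 0$, since $d\theta_t$ pushes $\hat N$ to itself and hence carries the splitting $T_p\hat M = T_p\partial M \oplus \mathbb{R}\,N|_p$ along the trajectory; combined with injectivity this yields the embedding exactly as you conclude.
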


For the remaining of this section, let $M$ be a manifold with boundary and $D$ be a normal distribution on $M$ with rank $r$. 

We first note that boundaries are essential for integrability of normal distributions by boundaries.

\begin{proposition}\label{interior}
If $K$ is an integral manifold with boundary of the normal distribution $D$, then $\interior K \cap \partial M = \emptyset$.
\end{proposition}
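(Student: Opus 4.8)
The plan is to argue by contradiction. Suppose there exists a point $p \in \interior K \cap \partial M$. The strategy is to produce a single vector in $D_p$ that on the one hand must be transverse to $\partial M$ (because $D$ is normal), and on the other hand must be tangent to $\partial M$ (because $p$ is an \emph{interior} point of $K$), thereby contradicting the trichotomy of Proposition \ref{trichotomy}.

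First I would invoke normality: since $D$ is normal and $p \in \partial M$, there is a vector $v \in D_p$ transverse to $\partial M$. Because $K$ is an integral manifold with boundary of $D$, the integral condition gives $Tl|_p(T_pK) = D_p$ for the inclusion $l : K \subset M$; as $l$ is an immersion, $Tl|_p$ is injective, so there is a unique $w \in T_pK$ with $Tl|_p(w) = v$. Thus $v$ is realised as the pushforward of a genuine tangent vector of $K$ at $p$.

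Next I would exploit that $p \in \interior K$. Interior points of a manifold with boundary possess neighbourhoods diffeomorphic to open subsets of Euclidean space, so every tangent vector there is realised by a \emph{two-sided} curve: there is $\gamma : (-\epsilon,\epsilon) \to K$ with $\gamma'(0) = w$. Composing with the inclusion yields $l \circ \gamma : (-\epsilon,\epsilon) \to M$, a two-sided curve in $M$ with $(l \circ \gamma)'(0) = Tl|_p(w) = v$. The corollary to Proposition \ref{trichotomy} then applies directly: the existence of such a two-sided curve realising $v$ forces $v$ to be tangent to $\partial M$, contradicting its choice as transverse. Hence no such $p$ exists and $\interior K \cap \partial M = \emptyset$.

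The main (and essentially only) technical point is the realisation of the interior tangent vector $w$ by a curve defined on a symmetric interval $(-\epsilon,\epsilon)$; this is where the hypothesis $p \in \interior K$ does all the work, since at a boundary point of $K$ one could only guarantee a one-sided curve, which is precisely the situation the trichotomy corollary permits for transverse vectors. Everything else is a routine chaining together of normality, the integral-manifold identity, and the corollary, so I do not expect any serious obstruction beyond making this interior-point observation precise.
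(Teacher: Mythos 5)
Your proof is correct and follows essentially the same route as the paper's: pull the transverse vector $v \in D_p$ back to a tangent vector of $K$ at the interior point $p$, realise it by a two-sided curve (possible precisely because $p \in \interior K$), push the curve forward into $M$, and contradict the trichotomy of Proposition \ref{trichotomy}. The only cosmetic difference is that the paper phrases the pullback step via an $i$-related vector on $\interior K$, whereas you use injectivity of $Tl|_p$ directly; the substance is identical.
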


\begin{proof}
Assume there is a $p \in \interior K \cap \partial M$. Taking a vector $N \in D_p$ transverse to $\partial M$, we get an $i$-related vector $n$ on $\interior K$ ($i : \interior K \subset M$) and hence a curve $\gamma : (-\epsilon,\epsilon) \to \interior K$ with $\gamma'(0) = n$. However, then $\gamma|^M: (-\epsilon,\epsilon) \to M$ is with $(\gamma|^M)'(0) = N$, contradicting $N$ being transverse to $\partial M$ from Proposition \ref{trichotomy}.
\end{proof}

\begin{proposition}\label{globalinward}
There exists an inward-pointing vector field $N$ on $M$ which is also a $D$-section.
\end{proposition}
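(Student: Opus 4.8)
The plan is to build $N$ by patching together global $D$-sections that are inward-pointing along the boundary using a partition of unity, exploiting the fact that at a boundary point the inward-pointing vectors form an open half-space, so that convex combinations of inward-pointing vectors remain inward-pointing.

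First I would fix $p \in \partial M$. Since $D$ is normal, there is a vector $v \in D_p$ transverse to $\partial M$; by Proposition~\ref{trichotomy} this $v$ is either inward- or outward-pointing, and in the latter case $-v \in D_p$ (as $D_p$ is a subspace) is inward-pointing. Thus there is an inward-pointing $v_p \in D_p$, and by the definition of a distribution there is a global $D$-section $X_p$ with $X_p|_p = v_p$. Working in a boundary chart $(V,\varphi)$ about $p$ with $\varphi(V) \subset \mathbb{H}^n$, write $X_p = \sum_i a^i \partial_i$; a vector at a boundary point is inward-pointing precisely when its last component is positive, so $a^n(p) > 0$. Since $a^n$ is continuous, $W_p := V \cap \{a^n > 0\}$ is an open neighbourhood of $p$ in $M$ on which $X_p$ is inward-pointing at every point of $W_p \cap \partial M$. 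The point to stress here is that $X_p$ is inward-pointing on a whole boundary neighbourhood, not merely at the single point $p$.

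Next, $\{W_p\}_{p \in \partial M} \cup \{\interior M\}$ is an open cover of $M$. I would take a partition of unity $\{\rho_\alpha\}$ subordinate to it and set $N := \sum_\alpha \rho_\alpha X_\alpha$, where $X_\alpha := X_{p(\alpha)}$ whenever $\operatorname{supp}\rho_\alpha \subset W_{p(\alpha)}$ and $X_\alpha$ is any fixed global $D$-section whenever $\operatorname{supp}\rho_\alpha \subset \interior M$. This is a locally finite sum of smooth $D$-sections, hence a smooth vector field, and since each $D_q$ is a subspace, $N|_q = \sum_\alpha \rho_\alpha(q)\, X_\alpha|_q \in D_q$ for every $q$, so $N$ is a $D$-section.

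It remains to verify that $N$ is inward-pointing along $\partial M$, which is the crux. Let $q \in \partial M$. Any partition function associated to $\interior M$ has support in $\interior M$ and hence vanishes on a neighbourhood of $q$, so $N|_q = \sum_\alpha \rho_\alpha(q)\, X_\alpha|_q$ where every contributing index satisfies $q \in \operatorname{supp}\rho_\alpha \subset W_{p(\alpha)}$; thus each such $X_\alpha|_q$ is inward-pointing and so has positive last component in any fixed boundary chart about $q$. As the coefficients $\rho_\alpha(q)$ are nonnegative and sum to $1$, the last component of $N|_q$ is a strictly positive convex combination, whence $N|_q$ is transverse and inward-pointing. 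The main obstacle is precisely this final convexity step: it forces the two earlier arrangements — that each chosen local section is inward-pointing on an entire boundary neighbourhood, and that the interior member of the cover cannot contribute at boundary points — so that $N|_q$ is genuinely a convex combination of inward-pointing vectors and therefore inward-pointing.
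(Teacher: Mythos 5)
Your proof is correct and follows essentially the same route as the paper's: choose, for each boundary point, a $D$-section that is inward-pointing on a boundary neighbourhood, then patch with a partition of unity subordinate to these neighbourhoods together with $\interior M$, using convexity of the inward-pointing half-space to conclude. You simply make explicit the details the paper leaves implicit (negating an outward-pointing transverse vector, openness of the inward-pointing condition via the last chart component, and the vanishing of the interior bump function at the boundary).
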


\begin{proof}
For each $p \in \partial M$, we may choose an open neighbourhood $U_p$ in $M$ containing $p$ and a $D$-section $N_p$ such that $N_p$ is inward-pointing on $U_p \cap \partial M$. Then, for the cover $\mathcal{U} = \{ U_p : p \in \partial M \} \cup \{ \interior M \}$ of $M$, take a partition of unity $\mathcal{P} = \{\rho_p : p \in \partial M\} \cup \{\rho\}$ on $M$ subordinated to $\mathcal{U}$. Then, the vector field $N = \sum_{p \in \partial M}\rho_p N_p$ on $M$ is inward-pointing and is a $D$-section.
\end{proof}

The Boundary Flowout Theorem (Theorem \ref{Boundary Flowout Theorem}) enables the use of collars with $D$ as follows.

\begin{proposition}\label{preprecollar}
There is a collar $\Sigma : \partial M \times [0,1) \to M$ whose diffeomorphism co-restriction $\sigma = \Sigma|^V$ (where $V = \Sigma(\partial M \times [0,1))$) satisfies the following. There is an inward-pointing $i^*D$-section $\tilde{N}$ ($i : V \subset M$) such that for each $p \in \partial V = \partial M$, the map $\sigma(p,\cdot) : [0,1) \to V$ is an integral curve starting at $p$.
\end{proposition}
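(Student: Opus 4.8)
Looking at Proposition~\ref{preprecollar}, I need to produce a collar whose integral-curve structure is compatible with the distribution $D$, driven by an inward-pointing $D$-section.

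\textbf{Plan.} The natural strategy is to feed the inward-pointing $D$-section produced by Proposition~\ref{globalinward} into the Boundary Flowout Theorem (Theorem~\ref{Boundary Flowout Theorem}), and then reparametrise the resulting flowout so that its domain is the standard product $\partial M \times [0,1)$ rather than the ``ragged'' region $U = \{(p,t) : t < \delta(p)\}$. The inward-pointing vector field $N$ from Proposition~\ref{globalinward} is exactly what is needed: it is simultaneously inward-pointing (so the flowout theorem applies) and a $D$-section (so its flow lines are tangent to $D$).

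\textbf{Key steps, in order.} First I would invoke Proposition~\ref{globalinward} to obtain an inward-pointing $D$-section $N$ on $M$. Second, I would apply the Boundary Flowout Theorem to $N$, yielding the smooth positive function $\delta : \partial M \to \mathbb{R}^+$ and the embedding $\Phi : U \to M$ whose slices $\Phi(p,\cdot) : [0,\delta(p)) \to M$ are integral curves of $N$ starting at $p$. Third, I would rescale the time variable by defining $\Sigma(p,t) = \Phi(p, t\,\delta(p))$ for $(p,t) \in \partial M \times [0,1)$; since $\delta$ is smooth and positive, the map $(p,t) \mapsto (p, t\,\delta(p))$ is a diffeomorphism from $\partial M \times [0,1)$ onto $U$, so $\Sigma$ is an embedding onto $V = \Phi(U)$, and $\Sigma(p,0) = \Phi(p,0) = p$, confirming $\Sigma$ is a genuine collar. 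Fourth, I would set $\sigma = \Sigma|^V$ and define the required vector field on $V$ by $\tilde{N} = (i^*N)$, i.e.\ the co-restriction of $N$ to $V$; this is an $i^*D$-section because $N$ is a $D$-section and $Ti$ carries it into $D$, and it is inward-pointing along $\partial V = \partial M$ because $N$ is.

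\textbf{The main subtlety.} The one point requiring care is that after the time-rescaling, the slices $\sigma(p,\cdot) : [0,1) \to V$ are integral curves of $\tilde{N}$ as \emph{claimed}, not merely reparametrised integral curves. Here I would note that the reparametrisation $t \mapsto t\,\delta(p)$ is \emph{linear in $t$ with slope $\delta(p)$} along each fixed slice, so $\sigma(p,\cdot)$ is an integral curve of the rescaled field $\delta(p)\,N$ rather than of $N$ itself. Consequently the honest choice is $\tilde{N} = \delta\cdot(i^*N)$, where $\delta$ is transported to $V$ via $\sigma$ (constant along each flow line, equal to $\delta(p)$ on the slice through $p \in \partial M$); reconciling this scalar factor is the only genuinely non-formal step, and it does not affect the two structural properties needed, since multiplying by the smooth positive function $\delta$ preserves both membership in $i^*D$ and the inward-pointing property along $\partial V$.
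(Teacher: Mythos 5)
Your proposal is correct and follows essentially the same route as the paper: Proposition~\ref{globalinward} supplies the inward-pointing $D$-section $N$, the Boundary Flowout Theorem (Theorem~\ref{Boundary Flowout Theorem}) gives $\delta$ and $\Phi$, and the rescaling $\Sigma(p,t) = \Phi(p,\delta(p)t)$ (as in the Collar Neighbourhood Theorem) produces the collar. Your explicit resolution of the reparametrisation subtlety --- taking $\tilde{N} = \delta \cdot (i^*N)$ with $\delta$ transported to $V$ so as to be constant along flow lines, which remains an inward-pointing $i^*D$-section --- correctly supplies the detail that the paper compresses into the phrase that $\sigma$ ``trivially inherits the desired properties from $\Phi$.''
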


\begin{proof}
Given such an $N$ from Proposition \ref{globalinward} and the resulting embedding $\Phi : U \to M$ from Theorem \ref{Boundary Flowout Theorem}, as seen in the proof of the Collar neighbourhood Theorem (Theorem 9.25 in Lee's book \cite{Lee}) we get an embedding $\Sigma : \partial M \times [0,1) \to M$ given by $\Sigma(p,t) = \Phi(p,\delta(p)t)$. The map $\sigma = \Sigma|^V$ trivially inherits the desired properties from $\Phi$.
\end{proof}

We will also require some simple pull-back properties for distributions.

\begin{proposition}\label{pullback dist to codim 1}
Let $M$ be a manifold with boundary and $D$ a distribution on $M$. Let $S$ be a manifold with boundary of codimension $1$ and $F : S \to M$ an injective immersion. Let $\omega$ be a $1$-form on $M$ such that $F^*\omega = 0$ and $X$ be a $D$-section with $\omega(X) = 1$. Then, $F^*D$ is a distribution on $S$ of rank $r \circ F-1$ where $r$ is the rank of $D$.
\end{proposition}

\begin{proof}
Let $p \in S$ and denote $q = F(p)$. From $\dim{S} = \dim{M}-1$, $\omega|_q(X|_q) = 1$, and injectivity of $TF|_p:T_pS\to T_qM$, we have that $\mathop{\text{im}}{TF|_p} = \ker \omega|_q$. Now, consider the linear map $A_q : T_q M \to T_q M$ given by $A_q(u) = u - \omega|_q(u)X|_q$. We claim that $TF|_p((F^*D)_p) = A_q(D_q)$.

Indeed, one readily sees that $(F^*D)_p = (TF|_p)^{-1}(D_q)$ and $\ker \omega|_q = \mathop{\text{im}}{A_q}$ so that,
\begin{equation*}
TF|_p((F^*D)_p) = TF|_p((TF|_p)^{-1}(D_q)) = \mathop{\text{im}}{TF|_p} \cap D_q = \ker \omega|_q \cap D_q = \mathop{\text{im}}{A_q} \cap D_q.
\end{equation*}
We now consider $\mathop{\text{im}}{A_q} \cap D_q$. For any $u,v \in T_qM$ such that $v = A_q(u)$, we have $v = u - \omega|_q(u)X|_q$. Hence, since $X|_q \in D_q$ and $D_q$ is a vector subspace of $T_qM$, we have $u \in D_q$ if and only if $v \in D_q$. This shows that $\mathop{\text{im}}{A_q} \cap D_q = A_q(D_q)$. Hence, in total,
\begin{equation*}
TF|_p((F^*D)_p) = \mathop{\text{im}}{A_q} \cap D_q = A_q(D_q),
\end{equation*}
as claimed.

To compute the dimension of $(F^*D)_p$, we consider the linear map $B_q = A_q|_{D_q}$. Since $\dim{\ker {B_q}} = 1$, the vector subspace $TF|_p((F^*D)_p) = A_q(D_q)$ has dimension $\dim \text{im} B_q = r(q)-1$. Hence, because $TF|_p$ is a linear injection, $(F^*D)_p$ is a vector subspace of $T_pS$ of dimension $r(q)-1 = (r \circ F-1)(p)$.

Now, let $v \in (F^*D)_p$. Then, $TF(v) \in D$ and $v \in T_pS$ so that $w = TF|_p(v) \in D_q$. Hence, there is a $D$-section $W$ with $W|_q = w$. Consider the vector field $A(W) = W - \omega(W)X$. For $p' \in S$, we have that,
\begin{equation*}
A(W)|_{F(p')} = A_{F(p')}(W|_{F(p')}) \in A_{F(p')}(D_{F(p')}) =  TF|_{p'}((F^*D)_{p'}),     
\end{equation*} 
so that $W|_{F(p')} = TF|_{p'}(u)$ for some $u \in (F^*D)_{p'} \subset T_{p'}S$. Hence, since $F$ is an immersion, there exists a unique map $Y : S \to TS$ with $Y|_{p'} \in T_{p'}S$ for $p' \in S$ such that $A(W)\circ F = TF \circ Y$. Then, since $F(S)$ is an submanifold with boundary in $M$, using a trivial extension of Proposition 8.23 to the case of $M$ with boundary from Lee's book \cite{Lee} (on restriction of vector fields to submanifolds with boundary), we obtain that $Y$ is a vector field on $S$. It is then clear that $Y$ is a $F^*D$-section with $Y|_p = v$.
\end{proof}

\begin{remark}\label{localproperty}
Let $M$ be a manifold with boundary and $D$ be a subset of $TM$. If $D$ is a distribution of rank $r$ on $M$, then for any open submanifold $U$ with boundary, denoting $\iota : U \subset M$, the subset $i*D$ is a distribution of rank $r|_{U}$ on $U$. Using smooth bump functions, the converse can also be established. Namely, if every point $p \in M$ has a neighbourhood $U$ for which $i^*D$ is a distribution on $U$ (where $\iota : U \subset M$), then $D$ is a distribution on $M$.
\end{remark}

With these pull-back properties we may in particular prove the opening statement of Theorem \ref{Normal Stefan-Sussmann}.

\begin{lemma}\label{boundary dist}
The pullback $\jmath^*D$ ($\jmath : \partial M \subset M$) is a distribution on $\partial M$ of rank $r|_{\partial M} - 1$.
\end{lemma}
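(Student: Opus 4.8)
The plan is to deduce this lemma directly from Proposition \ref{pullback dist to codim 1} by taking $S = \partial M$ and $F = \jmath$. The boundary $\partial M$ is a manifold (without boundary) of codimension $1$ in $M$, and $\jmath$ is an embedding, hence an injective immersion, so the structural hypotheses of that proposition are met. What remains is to manufacture the two auxiliary objects it requires: a $1$-form $\omega$ on $M$ with $\jmath^*\omega = 0$ and a $D$-section $X$ with $\omega(X) = 1$. Once these are supplied, Proposition \ref{pullback dist to codim 1} yields at once that $\jmath^*D$ is a distribution on $\partial M$ of rank $r \circ \jmath - 1 = r|_{\partial M} - 1$, which is precisely the assertion; in particular no separate rank computation is needed, as the drop by one is already recorded in that proposition.

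For the $D$-section I would take $X = N$, the inward-pointing $D$-section furnished by Proposition \ref{globalinward}; this is the only place normality of $D$ enters, since it is what guarantees such an $N$ exists. To build $\omega$, I would exploit that $N$, being inward-pointing, is transverse to $\partial M$ along its entirety (Proposition \ref{trichotomy}), so that for each $q \in \partial M$ one has the splitting $T_qM = T_q\partial M \oplus \mathbb{R}\,N|_q$. Along $\partial M$ I then define $\omega|_q$ to be the covector that annihilates $T_q\partial M$ and sends $N|_q$ to $1$; this is a smooth section of $T^*M$ over $\partial M$, which I extend to a global $1$-form $\omega$ on $M$. By construction $\jmath^*\omega = 0$ and $\omega(N) = 1$ hold along $\partial M$, which is all that the proof of Proposition \ref{pullback dist to codim 1} actually uses (the vector field $W - \omega(W)N$ appearing there is only evaluated on the image $\jmath(\partial M)$); alternatively, one may localise via Remark \ref{localproperty} and verify the distribution property in a neighbourhood of each boundary point, where these identities certainly hold. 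A concrete model for $\omega$ near $\partial M$ is the differential $dt$ of the normal coordinate of the collar from Proposition \ref{preprecollar}, rescaled so that it pairs to $1$ with $N$.

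The main obstacle is exactly this construction of $\omega$: one must produce a $1$-form that simultaneously kills the tangent directions to $\partial M$ and pairs nontrivially and normalised with the transverse $D$-section $N$. The transversality of $N$ is what makes $\omega(N)=1$ achievable rather than forcing a degenerate pairing, and the splitting it induces along $\partial M$ gives a clean, coordinate-free prescription for $\omega$ before extension. Everything downstream is then a direct citation of Proposition \ref{pullback dist to codim 1}, so the remainder of the argument is routine.
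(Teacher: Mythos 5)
Your proposal is correct and funnels through the same key result as the paper, Proposition \ref{pullback dist to codim 1}, but the auxiliary data $(\omega, X)$ is manufactured differently. The paper never works on all of $M$: it restricts to the collar neighbourhood $V = \Sigma(\partial M \times [0,1))$ from Proposition \ref{preprecollar}, takes $\omega = df$ with $f = (\pi_2 \circ \sigma^{-1})|^{\mathbb{R}}$ the collar coordinate, and pairs it with the collar's own generator $\tilde{N}$; since $\sigma(p,\cdot)$ is an integral curve of $\tilde{N}$, the identity $\omega(\tilde{N}) = 1$ holds on \emph{all} of $V$, so the proposition applies verbatim with ambient manifold $V$ (Remark \ref{localproperty} makes $i^*D$ a distribution on $V$, and functoriality gives $\jmath^*D = \jmath'^*(i^*D)$). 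You instead take the global $N$ from Proposition \ref{globalinward}, define $\omega$ along $\partial M$ via the splitting $T_qM = T_q\partial M \oplus \mathbb{R}\,N|_q$, and extend arbitrarily, so that $\omega(N) = 1$ holds only on $\partial M$ itself; you are therefore citing a strengthened variant of Proposition \ref{pullback dist to codim 1} in which the hypotheses are required only along $F(S)$. Your inspection of its proof is accurate --- every use of $\omega|_q(X|_q) = 1$ and of the map $A_q$ occurs at points $q = F(p')$, and $A(W) = W - \omega(W)X$ is a smooth global vector field for any smooth global $\omega$ --- so the deduction is sound, but it obliges the reader to reopen the proposition's proof rather than quote its statement. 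What your route buys is independence from any collar (only the transversality splitting along $\partial M$ is used); what the paper's buys is a clean verbatim citation.

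Two small cautions. First, your fallback remark that the identities ``certainly hold'' in a neighbourhood of each boundary point is false for a generic extension: $\omega(N) = 1$ holds exactly on $\partial M$ and nowhere else in general. To localise honestly, rescale to $\omega/\omega(N)$ on the open neighbourhood of $\partial M$ where $\omega(N) \neq 0$ (this preserves $\jmath^*\omega = 0$), and then apply the proposition there with the restricted distribution via Remark \ref{localproperty}. Second, your ``concrete model'' $dt$ from the collar of Proposition \ref{preprecollar} pairs to $1$ with the collar generator $\tilde{N}$ --- which is a pointwise rescaling $\delta N$ of $N$, not $N$ itself --- so the promised rescaling is genuinely needed there too; this is precisely why the paper phrases the argument with $\tilde{N}$ rather than with the global $N$.
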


\begin{proof}
Take a collar $\Sigma$ as in Proposition \ref{preprecollar} and adopt the notation from there. We have that $i^*D$ is a distribution of rank $r|_{V}$ whereby $\tilde{N}$ is a $i^*D$-section (see Remark \ref{localproperty}). With the diffeomorphism $\sigma$, we have a smooth function $f = (\pi_2 \circ \sigma^{-1})|^{\mathbb{R}}$ where $\pi_2 : \partial M \times [0,1) \to [0,1)$ is the projection onto $[0,1)$. 

Then, note that $\partial V = \partial M$ so that $\partial M$ is an embedded submanifold of $V$ where the inclusion $\jmath' : \partial M \subset V$ satisfies $i \circ \jmath' = \jmath$. Setting $\omega = df$, $\omega$ is a 1-form on $V$ we have $\jmath'^*\omega = 0$ and $\omega(\tilde{N}) = 1$. Hence, by Proposition \ref{pullback dist to codim 1}, $\jmath^*D = (i \circ \jmath')^*D = \jmath'^*(i^*D)$ is a distribution on $\partial M$ of rank $r|_{V}|_{\partial M}-1 = r|_{\partial M}-1$.
\end{proof}

Lemma \ref{boundary dist}, combined with Remark \ref{localproperty}, allows us to record the following; which is the opening statement of Theorem \ref{Normal Stefan-Sussmann}.

\begin{corollary}\label{opening of Normal Steffan-Sussmann}
Let $M$ be a manifold with boundary and $D$ be a normal distribution of rank $r$ on $M$. Then, the pullback $\imath^*D$ ($\imath : \interior M \subset M$) is a distribution of rank $r|_{\interior M}$ on $\interior M$ and $\jmath^*D$ ($\jmath : \partial M \subset M$) is a distribution of rank $r|_{\partial M}-1$ on $\partial M$.
\end{corollary}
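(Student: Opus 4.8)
The plan is to assemble the two assertions from results already established, so the proof is essentially a matter of packaging. For the interior statement, I would note that $\interior M$ is an open submanifold (without boundary) of $M$ and apply Remark \ref{localproperty} directly with $U = \interior M$ and $\iota = \imath$. Since $D$ is a distribution of rank $r$ on $M$ by hypothesis, that remark immediately yields that $\imath^*D$ is a distribution of rank $r|_{\interior M}$ on $\interior M$, with no appeal to normality. For the boundary statement, I would simply invoke Lemma \ref{boundary dist}, whose conclusion is verbatim the claim that $\jmath^*D$ is a distribution of rank $r|_{\partial M} - 1$ on $\partial M$. Combining the two halves gives the corollary.

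Accordingly, there is no substantial obstacle remaining at this stage: all of the genuine work has already been carried out upstream. The normality hypothesis enters only through Lemma \ref{boundary dist}, which relied on Proposition \ref{preprecollar} to furnish a collar together with an inward-pointing section of the pulled-back distribution, and on Proposition \ref{pullback dist to codim 1} to account for the rank drop by one upon pulling back along the codimension-one inclusion $\partial M \subset M$. The interior half, by contrast, requires nothing beyond the local characterisation of distributions recorded in Remark \ref{localproperty}, and in particular does not use that $D$ is normal. The only point worth stating explicitly is that the rank functions appearing in the two assertions are the restrictions $r|_{\interior M}$ and $r|_{\partial M}$ of the single rank function $r$ of $D$, which is automatic since $\imath$ and $\jmath$ are inclusions.
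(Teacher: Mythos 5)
Your proposal is correct and matches the paper's own reasoning exactly: the paper records this corollary by combining Remark \ref{localproperty} (applied with $U = \interior M$, which needs no normality) for the interior statement with Lemma \ref{boundary dist} for the boundary statement. Nothing further is needed.
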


\subsection{Implication S1 $\Rightarrow$ S2}
Let $M$ be a manifold with boundary $\partial M \neq \emptyset$ and $D$ be a normal distribution of rank $r$ on $M$. Assume that $D$ is neatly integrable. Let $\imath : \interior M \subset M$ and $\jmath : \partial M \subset M$ denote the inclusions.

\begin{lemma}
The pullback distribution $\imath^*D$ is integrable.
\end{lemma}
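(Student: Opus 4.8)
The plan is to manufacture, for each point of $\interior M$, an integral manifold of $\imath^*D$ by passing to the manifold interior of a neat integral manifold with boundary of $D$. Fix $p \in \interior M$. By hypothesis S1 there is a neat integral manifold with boundary $L$ of $D$ through $p$, with inclusion $l : L \subset M$; write $p = l(x)$. Since $L$ is neat, $l(\partial L) \subset \partial M$, and because $p \in \interior M$ cannot lie in $\partial M$, we must have $x \in \interior L$. More is needed, namely that the \emph{whole} interior maps into $\interior M$: by Proposition \ref{interior} applied to $L$, one has $l(\interior L) \cap \partial M = \emptyset$, so $l|_{\interior L}$ takes values in $\interior M$ and we may form the co-restriction $\tilde{l} = l|_{\interior L}^{\interior M} : \interior L \to \interior M$.

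First I would check that $\interior L$, with the map $\tilde{l}$, is a connected submanifold of $\interior M$. The interior $\interior L$ is an open submanifold (without boundary) of $L$, so the inclusion $\interior L \subset L$ is an injective immersion; composing with $l$ and co-restricting to the open submanifold $\interior M \subset M$ leaves $\tilde{l}$ an injective immersion. Connectedness follows from the standard fact that the interior of a connected manifold with boundary is itself connected. Hence $I := \tilde{l}(\interior L)$ is a connected (immersed) submanifold of $\interior M$ containing $p = \tilde{l}(x)$.

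Next I would verify the integral-manifold condition $T\tilde{l}|_x(T_x\interior L) = (\imath^*D)_{\tilde{l}(x)}$ for every $x \in \interior L$. The key point is that $\imath : \interior M \subset M$ is the inclusion of an open submanifold, so $T\imath|_q : T_q\interior M \to T_qM$ is a linear isomorphism for each $q \in \interior M$, and $(\imath^*D)_q = (T\imath|_q)^{-1}(D_q)$. Since interior points of $L$ satisfy $T_x\interior L = T_xL$ and $\imath \circ \tilde{l} = l|_{\interior L}$, the chain rule gives $T\imath|_{\tilde{l}(x)}\bigl(T\tilde{l}|_x(T_x\interior L)\bigr) = Tl|_x(T_xL) = D_{l(x)}$, where the last equality is the defining property of the integral manifold with boundary $L$. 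Applying the isomorphism $(T\imath|_{\tilde{l}(x)})^{-1}$ yields $T\tilde{l}|_x(T_x\interior L) = (\imath^*D)_{\tilde{l}(x)}$, as required. Thus $I$ is an integral manifold of $\imath^*D$ through $p$, and as $p \in \interior M$ was arbitrary, $\imath^*D$ is integrable.

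The only genuinely delicate inputs are the two structural claims: that $l(\interior L) \subseteq \interior M$, which is precisely Proposition \ref{interior} and is the one place where normality of $D$ is used, and that $\interior L$ is connected. Everything else is bookkeeping about open inclusions and the chain rule, so I expect the main obstacle to be invoking the interior-connectedness fact cleanly rather than any substantive difficulty.
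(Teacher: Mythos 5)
Your proof is correct and takes essentially the same route as the paper's: pass to the interior of a neat integral manifold with boundary $L$ through $p$, use neatness together with Proposition \ref{interior} to place $\interior L$ inside $\interior M$, invoke connectedness of the interior of a connected manifold with boundary, and check the tangency condition $Ti|_q(T_q\interior L) = (\imath^*D)_q$ via the open inclusion $\imath$. The paper's proof is just a terser version of yours, leaving implicit the chain-rule bookkeeping and the citation of Proposition \ref{interior} that you spell out.
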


\begin{proof}
Let $p \in \interior M$. Take a neat integral manifold $L$ of $D$ with $p \in L$. Then, we have $p \in \interior L$. Since $L$ is connected, $\interior L$ is connected also. Since $\interior L \subset \interior M$, we have that $\interior L$ is immersed in $\interior M$. Clearly, $Ti|_q(T_q\interior L) = (\imath^*D)_q$ for $q \in \interior L$ (where $i : \interior L \subset \interior M$).
\end{proof}

\begin{lemma}\label{boundaryint}
Let $L$ be a neat integral manifold with boundary of $D$. Then, $Tj|_p(T_p\partial L) = (\jmath^*D)_p$ for $p \in \partial L$ where $j : \partial L \subset \partial M$.
\end{lemma}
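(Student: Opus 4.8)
The plan is to show that $\partial L$ is an integral manifold of $\jmath^*D$ by first obtaining the inclusion $Tj|_p(T_p\partial L) \subseteq (\jmath^*D)_p$ from the naturality of the tangent map, and then upgrading it to an equality by a dimension count. Recall from the pullback definition that $(\jmath^*D)_p = (T\jmath|_p)^{-1}(D_p)$.

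First I would fix $p \in \partial L$ and organise the relevant inclusions into a commuting square. Writing $k : \partial L \subset L$ for the inclusion of the boundary into $L$, and recalling $l : L \subset M$, $j : \partial L \subset \partial M$, $\jmath : \partial M \subset M$, one has the identity $\jmath \circ j = l \circ k$, since both sides are the inclusion $\partial L \subset M$. (That $j$ is a well-defined smooth immersion uses neatness, namely $\partial L \subset \partial M$, together with $\partial M$ being embedded in $M$, so that the co-restriction to $\partial M$ of the immersion $l \circ k$ is smooth and $Tj|_p$ is injective.) Applying the tangent functor gives $T\jmath|_p \circ Tj|_p = Tl|_p \circ Tk|_p$.

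Next I would establish the inclusion. For $w \in T_p\partial L$, the identity above gives $T\jmath|_p(Tj|_p(w)) = Tl|_p(Tk|_p(w))$. Since $Tk|_p(w) \in T_pL$ and $L$ is an integral manifold, $Tl|_p(Tk|_p(w)) \in Tl|_p(T_pL) = D_p$; hence $Tj|_p(w) \in (T\jmath|_p)^{-1}(D_p) = (\jmath^*D)_p$, giving $Tj|_p(T_p\partial L) \subseteq (\jmath^*D)_p$. Finally I would match dimensions. Because $L$ is an integral manifold and $l$ is an immersion, $\dim L = \dim T_pL = \dim D_p = r(p)$ at every point of $L$ (so $r$ is constant along $L$), whence $\dim \partial L = r(p)-1$; as $j$ is an immersion, $\dim Tj|_p(T_p\partial L) = r(p)-1$. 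On the other hand, by Lemma \ref{boundary dist} the rank of $\jmath^*D$ is $r|_{\partial M}-1$, so $\dim (\jmath^*D)_p = r(p)-1$. A subspace contained in a space of the same finite dimension is the whole space, so the inclusion of the previous step is an equality.

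The only genuine subtlety, and the step I would treat most carefully, is the bookkeeping tying $\dim \partial L$ to the rank $r(p)$: one must use that $L$ being an integral manifold forces $r$ to be constant along $L$ and equal to $\dim L$, and that Lemma \ref{boundary dist} pins down $\dim(\jmath^*D)_p$ as $r(p)-1$ rather than $r(p)$. Everything else is the formal naturality of tangent maps under the commuting square $\jmath \circ j = l \circ k$.
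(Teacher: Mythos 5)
Your proposal is correct and follows essentially the same route as the paper's proof: the commuting square of inclusions $\jmath \circ j = l \circ k$ gives the containment $Tj|_p(T_p\partial L) \subset (\jmath^*D)_p$ by naturality of tangent maps, and Lemma \ref{boundary dist} supplies the dimension count $\dim(\jmath^*D)_p = r(p)-1$ that upgrades containment to equality. Your version merely spells out the bookkeeping ($\dim L = r(p)$, hence $\dim\partial L = r(p)-1$, preserved under the immersion $j$) that the paper leaves implicit.
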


\begin{proof}
Let $p \in \partial L$. Then $\partial L \neq \emptyset$ and denoting $l : L \subset M$ and $g : \partial L \subset L$, we get,
\begin{equation*}
\begin{split}
T\jmath|_p(Tj|_p(T_p\partial L)) &= T(\jmath \circ j)|_p(T_p\partial L)\\
&= T(l \circ g)|_p(T_p\partial L)\\
&= Tl|_p(Tg|_p(T_p\partial L))\\
&\subset Tl|_p(T_pL)\\
&= D_p.
\end{split}
\end{equation*}

Hence, $Tj|_p(T_p\partial L) \subset (T\jmath|_p)^{-1}(D_p) = (\jmath^*D)_p$. Since $\dim{T_pL} = \dim{D_p}$, we have by Proposition \ref{boundary dist} that $\dim{Tj|_p(T_p\partial L)} =\dim{ (\jmath^*D)_p}$. Hence, we must have that $Tj|_p(T_p\partial L) = (\jmath^*D)_p$.
\end{proof}

\begin{lemma}
The pullback $\jmath^*D$ is integrable.
\end{lemma}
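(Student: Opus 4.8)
The plan is to manufacture, through an arbitrary boundary point, an integral manifold of $\jmath^*D$ by taking the boundary of a neat integral manifold of $D$ and cutting down to the connected component containing the point. The essential tangency computation has already been done in Lemma \ref{boundaryint}, so the work here is locating the point on the boundary of the integral manifold and handling connectedness.

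First I would fix $p \in \partial M$ and invoke hypothesis (S1): since $D$ is neatly integrable, there is a neat integral manifold with boundary $L$ of $D$ with $p \in L$. The next step is to argue that $p$ actually lies on $\partial L$ rather than in $\interior L$. This is where normality enters: by Proposition \ref{interior}, $\interior L \cap \partial M = \emptyset$, and since $p \in L \cap \partial M$ this forces $p \in \partial L$. In particular $\partial L \neq \emptyset$, and neatness gives $\partial L \subset \partial M$.

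Before extracting an integral manifold I would confirm that $\partial L$ is a boundaryless submanifold immersed in $\partial M$. Writing $l : L \subset M$, $g : \partial L \subset L$, and $j : \partial L \subset \partial M$, neatness makes $j$ well defined with $\jmath \circ j = l \circ g$. Because $\partial M$ is embedded (hence weakly embedded) in $M$ and $l \circ g$ is smooth with image in $\partial M$, the co-restriction $j$ is smooth; and since $l \circ g = \jmath \circ j$ is an immersion while $\jmath$ is an embedding, $j$ is itself an immersion. Thus $\partial L$ is an immersed boundaryless submanifold of $\partial M$.

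Finally, since an integral manifold is required to be connected but $\partial L$ need not be, I would pass to the connected component $C$ of $\partial L$ containing $p$; this is a connected immersed submanifold of $\partial M$ with $T_qC = T_q\partial L$ for every $q \in C$. Lemma \ref{boundaryint} then gives $Tj|_q(T_qC) = Tj|_q(T_q\partial L) = (\jmath^*D)_q$ for all $q \in C$, so $C$ is an integral manifold of $\jmath^*D$ through $p$; as $p$ was arbitrary, $\jmath^*D$ is integrable. I expect the only genuine subtlety to be the combination of placing $p$ on $\partial L$ via normality and verifying smoothness and the immersion property of $j$; the tangency equality is supplied verbatim by Lemma \ref{boundaryint}, leaving only the restriction to a component.
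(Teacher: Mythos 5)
Your proposal is correct and follows essentially the same route as the paper: take a neat integral manifold $L$ through $p$, place $p$ on $\partial L$ via Proposition \ref{interior}, pass to the connected component of $\partial L$ containing $p$, and conclude with Lemma \ref{boundaryint}. Your extra verification that $j : \partial L \subset \partial M$ is a smooth immersion is sound added detail that the paper leaves implicit, not a different argument.
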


\begin{proof}
Let $p \in \partial M$. Again, take a neat integral manifold with boundary $L$ of $D$ with $p \in L$. Then $p \in \partial L$ from Proposition \ref{interior}. Let $J$ be the connected component of $\partial L$ containing $p$ (an open submanifold of $\partial L$). By Lemma \ref{boundaryint}, $J$ is an integral manifold of $\jmath^*D$.
\end{proof}

\begin{lemma}\label{interiorexpand}
For every $p \in \partial M$, there exists a neat integral manifold $L$ of $D$ with boundary such that $p \in L$ and $\interior L$ is a maximal integral manifold of $\imath^*D$.
\end{lemma}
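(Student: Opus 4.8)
The plan is to start from a neat integral manifold supplied by the standing hypothesis S1, enlarge its interior to the maximal integral manifold of $\imath^*D$, and then reattach the correct boundary using the collar of Proposition \ref{preprecollar}. First I would use S1 to pick a neat integral manifold $L_0$ with $p \in L_0$; by Proposition \ref{interior} this forces $p \in \partial L_0$, and (as already argued for the interior) $\interior L_0$ is a connected integral manifold of $\imath^*D$ with $\dim \interior L_0 = \dim L_0 = r(p)$. Let $I$ be the maximal integral manifold of $\imath^*D$ containing $\interior L_0$ (Theorem \ref{boundarylessmaximal}); then $\interior L_0$ is an open submanifold of $I$ and, crucially, $\dim I = r(p)$. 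This dimension count is the essential use of $L_0$: it pins the rank of $\imath^*D$ along the whole of $I$ to the value $r(p)$, which is exactly what will let the reattached object be an integral manifold through $p$.

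Next I would manufacture the boundary to attach. Take the collar $\Sigma:\partial M \times [0,1) \to M$ of Proposition \ref{preprecollar}, with image $V$ and inward-pointing $D$-section $\tilde{N}$ whose integral curves are the collar curves $t \mapsto \Sigma(q,t)$. For $q \in \partial M$ these curves lie in $\interior M$ for $t>0$, where $\tilde{N}$ restricts to an $\imath^*D$-section, so Proposition \ref{max contain int curves} forces each such curve to stay inside a single maximal integral manifold of $\imath^*D$. Hence $I$ is saturated by collar curves, and in collar coordinates $I \cap V = \Sigma(B \times (0,1))$ with $B = \{q \in \partial M : \Sigma(q,t) \in I \text{ for } t \in (0,1)\}$. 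I would check $p \in B$: since $L_0$ is neat and $\tilde{N}|_p \in D_p = T_pL_0$ is transverse to $\partial M$, Proposition \ref{trichotomy} shows $\tilde{N}|_p$ points into $\interior L_0 \subset I$, so the collar curve from $p$ immediately enters $I$. I then set $L = I \cup B$.

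Finally I would endow $L$ with the structure of a neat integral manifold. Because $\tilde{N}$ is tangent to $I$ while the collar slices $\Sigma(\partial M, t_0)$ are transverse to it, $I$ meets each slice transversally; combined with the saturation identity this exhibits $B$ as an integral manifold of $\jmath^*D$ of dimension $r(p)-1$ with $T_qB = (\jmath^*D)_q$, so that $r$ is forced to equal $r(p)$ along $B$. The flowout $\Sigma|_{B \times [0,1)}$ then supplies half-space charts near $\partial L = B$ and $I$ supplies interior charts, which agree on the overlap $\Sigma(B\times(0,1)) = I \cap V$. Granting this, $\interior L = I$ is maximal, $\partial L = B \subset \partial M$ so $L$ is neat, $L$ is connected since $I$ is connected and dense in $L$, and at each boundary point $T_qL = (\jmath^*D)_q \oplus \langle \tilde{N}|_q\rangle = D_q$ by Lemma \ref{boundaryint}, giving a neat integral manifold through $p$ with maximal interior. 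The hard part will be precisely this gluing: showing that $B$ is a genuine smooth (immersed) submanifold and that the set-theoretic product parametrization $\Sigma : B \times [0,1) \to L$ is smoothly compatible with the intrinsic structure of the immersed, possibly non-embedded, manifold $I$. The decisive tool is the weak embeddedness of $I$ in $\interior M$ (Proposition \ref{boundarylessweakembedded}), which upgrades the continuous parametrization to a smooth map into $I$ and thereby turns the saturation identity into a diffeomorphism, closing the argument.
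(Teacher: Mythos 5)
Your first two paragraphs are sound, and they overlap with what the paper actually does: taking a neat integral manifold $L_0$ through $p$ via S1, passing to the maximal integral manifold $I$ of $\imath^*D$ containing $\interior L_0$ (Theorem \ref{boundarylessmaximal}), the saturation of $I$ by collar curves via Proposition \ref{max contain int curves}, and the check that $p \in B$ are all correct (indeed the paper needs exactly that saturation argument, but later, in Lemma \ref{intecollar}). The genuine gap is in your third paragraph, at the words ``combined with the saturation identity this exhibits $B$ as an integral manifold of $\jmath^*D$ \dots with $T_qB = (\jmath^*D)_q$, so that $r$ is forced to equal $r(p)$ along $B$.'' Neither claim follows from transversality and saturation, and they are precisely the hard content of the lemma. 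The rank of a distribution is only lower semicontinuous, so saturation gives $r(q) \le r(p)$ for $q \in B$, not equality. Moreover, the smooth structure you put on $B$ is cut out from $D$ at \emph{interior} collar points $\Sigma(q,t_0)$; to convert that into the statement $T\jmath(T_qB) \subset D_q$ about $D$ at the \emph{boundary} point $q$, you would need to know that $\Sigma$ intertwines $D$ across the boundary --- that is, the $D$-collar property of Lemma \ref{collarexist}, which the paper proves downstream of this very lemma, so this route is circular as stated. A concrete illustration that your cited reasons cannot suffice: on $M = \mathbb{R}\times[0,1)$ let $D$ be spanned by the vector fields $\partial_t$ and $t\,\partial_x$. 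This is a normal distribution, $\imath^*D$ is integrable with unique maximal integral manifold $I = \interior M$, the identity collar is tangent to $D$, every collar curve lies in $I$ for $t>0$, and every slice is transverse to $I$; yet $B = \partial M$ has $T_qB \neq (\jmath^*D)_q = \{0\}$ and $r$ jumps from $1$ on the boundary to $2$ in the interior, so $I \cup B = M$ is not an integral manifold with boundary. (S1 fails in this example, of course; the point is that any correct proof of your claims must re-invoke S1 at every point $q \in B$, which your proposal never does --- you use S1 only once, at $p$.)

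By contrast, the paper's proof is far simpler and needs no collar at all: it glues $I$ with the original neat integral manifold $K = L_0$ itself. Using neatness and maximality, $I \cap K = \text{Pts}(\interior K)$, and $\interior K$ is an open submanifold both of $K$ and of $I$; hence the two atlases are compatible and the point set $I \cup K$ carries a smooth structure $L$ in which $I$ is an open submanifold and $K$ is an open submanifold with boundary. Then $\interior L = I$, $\partial L = \partial K \subset \partial M$, and the tangency condition $Tl|_q(T_qL) = D_q$ at boundary points is \emph{inherited from $K$} rather than re-proved. If you insist on the larger object $I \cup B$ (morally the maximal neat integral manifold), the repair is to run this same gluing at every $q \in B$: S1 gives a neat integral manifold $K_q$ through $q$, whose interior lies in a maximal integral manifold of $\imath^*D$ that must equal $I$ because the collar curve at $q$ eventually lies in both; this is where the missing rank and tangency statements actually come from. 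Your appeal to weak embeddedness (Proposition \ref{boundarylessweakembedded}) does correctly resolve the smooth-compatibility issue you flagged as ``the hard part,'' but that was not the decisive difficulty.
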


\begin{proof}
Let $p \in \partial M$. By assumption, there exists a neat integral manifold $K$ of $D$ with boundary such that $p \in K$. Then, $\interior K$ is an integral manifold of $\imath^*D$. Then, from Theorem \ref{boundarylessmaximal}, there exists a maximal integral manifold $I$ of $\imath^*D$ whereby $\interior K$ is an open submanifold of $I$. Since also $\interior K$ is an open submanifold of $K$ and $I \cap K = \text{Pts}(\interior K)$, we have a manifold with boundary $L$ with $\text{Pts}(L) = I \cup K$ (so $p \in L$) such that $I$ is an open submanifold of $L$ and $K$ is an open submanifold with boundary in $L$. It follows easily that $L$ is an integral manifold with boundary of $D$. Moreover, since again $\interior K$ is an open submanifold of $I$, we obtain that $\interior L = I$ and $\partial L = \partial K \subset \partial M$ (so $L$ is neat).
\end{proof}

For the following, fix a collar $\Sigma : \partial M \times [0,1) \to M$ as in Proposition \ref{preprecollar} and inherit the surrounding notation from there.

\begin{lemma}\label{intecollar}
Let $p \in \partial M$. Then, there exists a neat integral manifold $L$ containing $p$ and a collar $\rho$ on $L$ such that $l \circ \rho = \Sigma \circ k$.
\end{lemma}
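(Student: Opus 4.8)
The plan is to take for $L$ the neat integral manifold supplied by Lemma \ref{interiorexpand}, so that $p \in L$ and $\interior L$ is a \emph{maximal} integral manifold of $\imath^*D$; this maximality is exactly what will keep the collar curves of $\Sigma$ inside $L$. Write $N$ for the global inward-pointing $D$-section of Proposition \ref{globalinward} from which $\Sigma$ was built, so that $\Sigma(q,t) = \Phi(q,\delta(q)t)$ with $\Phi$ the boundary flowout of $N$ and $\delta$ its escape function, and take $k = j \times \mathrm{id}_{[0,1)}$ where $j : \partial L \subset \partial M$. Since $L$ is an integral manifold, $N$ is tangent to $L$ and restricts to a smooth $l$-related vector field $\hat N$ on $L$. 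First I would verify that $\hat N$ is inward-pointing along all of $\partial L$: for $q \in \partial L \subset \partial M$ the vector $N|_q$ is inward-pointing in $M$, and pushing forward by $l$ a curve realising $\hat N|_q$ as tangent to or outward-pointing from $\partial L$ would realise $N|_q$ as tangent to or outward-pointing from $\partial M$, contradicting Proposition \ref{trichotomy}.

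With $\hat N$ inward-pointing along $\partial L$, I would apply the Boundary Flowout Theorem (Theorem \ref{Boundary Flowout Theorem}) to $\hat N$ on $L$; its boundary flow is jointly smooth near $\partial L \times \{0\}$ and its $l$-image is the boundary flow of $N$, i.e.\ $l(\beta_q(s)) = \Phi(q,s)$, where $\beta_q$ denotes the maximal integral curve of $\hat N$ starting at $q \in \partial L$, defined on some interval $[0,b_q)$. The crux is the inequality $b_q \geq \delta(q)$. I would prove it by contradiction: if $b_q < \delta(q)$, then $\beta_q$ enters $\interior L$ immediately (by inward-pointingness) and cannot return to $\partial L$, so $\beta_q|_{(0,b_q)}$ is an integral curve of the $\imath^*D$-section $N|_{\interior M}$ meeting the maximal integral manifold $\interior L$; Proposition \ref{max contain int curves} then places the whole of $\Phi(q,\cdot)|_{(0,\delta(q))}$ inside $\interior L$. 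As $\interior L$ is weakly embedded in $\interior M$ (Proposition \ref{boundarylessweakembedded}), the smooth curve $\Phi(q,\cdot)|_{(0,\delta(q))}$ co-restricts to a smooth curve into $\interior L$, which is an $\hat N$-integral curve extending $\beta_q$ beyond $b_q$, contradicting maximality. Hence $b_q \geq \delta(q)$.

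Granting $b_q \geq \delta(q)$, I would set $\rho(q,t) := \beta_q(\delta(q)t)$, which is then defined on all of $\partial L \times [0,1)$. Joint smoothness of $\rho$ follows from smoothness of the boundary flow of $\hat N$ — supplied near $\partial L$ by the Boundary Flowout Theorem and in the interior by ordinary flow theory together with the weak embeddedness of $\interior L$ — and from smoothness of $\delta$. One has $\rho(q,0)=q$, and $\rho$ is an embedding onto a neighbourhood of $\partial L$ in $L$, so $\rho$ is a collar. The required identity is then immediate: $l(\rho(q,t)) = l(\beta_q(\delta(q)t)) = \Phi(q,\delta(q)t) = \Sigma(q,t) = \Sigma(k(q,t))$, giving $l \circ \rho = \Sigma \circ k$.

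I expect the single real obstacle to be the inequality $b_q \geq \delta(q)$ of the middle step, namely that the flow of $\hat N$ within $L$ persists at least as long as the flow of $N$ within $M$. This is where the maximality of $\interior L$ from Lemma \ref{interiorexpand}, the integral-curve containment of Proposition \ref{max contain int curves}, and the weak embeddedness of $\interior L$ (Proposition \ref{boundarylessweakembedded}) must be used in concert; by contrast the inward-pointingness along $\partial L$ and the collar and identity checks are routine.
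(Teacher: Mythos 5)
Your proposal is correct and is essentially the paper's own argument: both take $L$ from Lemma \ref{interiorexpand}, restrict the collar-generating inward-pointing vector field to $L$ and apply the Boundary Flowout Theorem (Theorem \ref{Boundary Flowout Theorem}) near $\partial L$, and handle the interior by combining Proposition \ref{max contain int curves} (the collar curves stay inside the maximal integral manifold $\interior L$) with weak embeddedness (Proposition \ref{boundarylessweakembedded}) to get smoothness. The only difference is packaging: the paper defines $\rho$ directly as the co-restriction of $\Sigma \circ k$ to $L$, built and glued from a boundary piece and an interior piece, whereas you realise $\rho$ as the rescaled intrinsic flow of $\hat N$ and phrase the interior step as the persistence inequality $b_q \geq \delta(q)$ --- the same two facts arranged in a different order.
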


\begin{proof}
Fix an integral manifold $L$ of $D$ with boundary such that $p \in L$ as in Lemma \ref{interiorexpand}. Denote by $k : \partial L \times [0,1) \subset \partial M \times [0,1)$ and $l : L \subset M$ the inclusions.

Consider the diffeomorphism $\sigma = \Sigma|^V : \partial M \times [0,1) \to V$, the inclusion $h_1 : V \cap L \subset V$ and the $h_1$-related vector field $\tilde{n}$ to $\tilde{N}$. We see that $\tilde{n}$ on $V\cap L$ is inward-pointing. Applying the Boundary Flowout Theorem (Theorem \ref{Boundary Flowout Theorem}) to $\tilde{n}$, write $U_1$ for the obtained neighbourhood of $\partial(V \cap L) \times \{0\} = \partial L \times \{0\}$ and the obtained embedding $\tilde{\rho}_1 : U_1 \to V\cap L$. By existence and uniqueness of integral curves, we have that $h_1 \circ \tilde{\rho}_1 = \sigma \circ k|_{U_1}$. With the embedding $\rho_1 = \tilde{\rho}_1|^L$, we have $l \circ \rho_1 = \Sigma \circ k|_{U_1}$. 

Now, the open submanifold $V \cap \interior L$ of $L$ is such that, the connected components of $V\cap L$ are maximal integral manifolds of the distribution $i^*D$ on the manifold $V \cap \interior M$ where $i : V \cap \interior M \subset M$. Let $\tilde{N}'$ be the $i'$-related vector field to $\tilde{N}$ where $i' : V \cap \interior M \subset V$. Then, $\tilde{N}'$ is a $i^*D$-section. Hence, from Proposition \ref{max contain int curves}, the integral curves of $\tilde{N}'$ which intersect $V \cap \interior L$ are contained in $V \cap \interior L$. Hence, the equation $l \circ \rho_1 = \Sigma \circ k|_{U_1}$ implies that, with $U_2 = \partial L \times (0,1)$, there exists a map $\tilde{\rho}_2 : U_2 \to V \cap \interior L$ such that $h_2 \circ \tilde{\rho}_2 = \tilde{\Sigma} \circ k_2$ where $\tilde{\Sigma}  = \Sigma|_{\partial M \times (0,1)}^{\interior M}$ is an embedding, $k_2 : \partial L \times (0,1) \subset \partial M \times (0,1)$, $h_2 : V \cap \interior L \subset \interior M$. 

Since $V \cap \interior L$ is an open submanifold of the integral manifold $\interior L$ of $\imath^*D$, we have by Proposition \ref{boundarylessweakembedded}, that $V \cap \interior L$ is weakly embedded in $\interior M$. Hence, $\tilde{\rho}_2$ is smooth. Differentiating the equation $h_2 \circ \tilde{\rho}_2 = \tilde{\Sigma} \circ k_2$, we obtain by the Inverse Function Theorem that $\tilde{\rho}_2$ is a local diffeomorphism and hence an embedding (because of injectivity). Hence the map $\rho_2 = \tilde{\rho_2}|^{L}$ is also an embedding and satisfies $l \circ \rho_2 = \Sigma \circ k|_{U_2}$.

We also see that $\rho_1$ and $\rho_2$ agree on $U_1 \cap U_2$ so there exists a (unique) smooth map $\rho : \partial L \times [0,1) \to L$ with $\rho_{|U_1} =\rho_1$ and $\rho_{|U_2} = \rho_2$. It follows that $\rho$ satisfies $l \circ \rho = \Sigma \circ k$ and that $\rho$ is an embedding.
\end{proof}

\begin{lemma}\label{collarexist}
The collar $\Sigma$ is a $D$-collar.
\end{lemma}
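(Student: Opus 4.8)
The plan is to fix an arbitrary point $(p,t) \in \partial M \times [0,1)$ and verify the defining identity of a $D$-collar at that point by transporting everything to a single neat integral manifold lying over $\Sigma(p,t)$. The key input is Lemma \ref{intecollar}, which for this $p$ supplies a neat integral manifold $L$ with $p \in L$ together with a collar $\rho : \partial L \times [0,1) \to L$ satisfying $l \circ \rho = \Sigma \circ k$, where $l : L \subset M$ and $k : \partial L \times [0,1) \subset \partial M \times [0,1)$ is the inclusion. (Neatness together with Proposition \ref{interior} guarantees $p \in \partial L$, so that $p$ may legitimately be regarded as a point of $\partial L$ and $(p,t)$ lies in the domain of $\rho$.)

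First I would record the splitting coming from $\rho$ being a collar. Since $\rho$ is an embedding onto a neighbourhood of $\partial L$ in $L$, the tangent map $T\rho|_{(p,t)}$ is an isomorphism onto $T_{\rho(p,t)}L$, and because $T_{(p,t)}(\partial L \times [0,1)) = T_p\partial L \oplus T_t[0,1)$ splits along the two factors, one obtains
\[
T_{\rho(p,t)}L = T\rho(\cdot,t)|_p(T_p\partial L) + T\rho(p,\cdot)|_t(T_t[0,1)).
\]
As $L$ is an integral manifold with boundary of $D$ and $l(\rho(p,t)) = \Sigma(p,t)$, applying the (linear) map $Tl|_{\rho(p,t)}$ to both sides and using $Tl|_{\rho(p,t)}(T_{\rho(p,t)}L) = D_{\Sigma(p,t)}$ expresses $D_{\Sigma(p,t)}$ as the sum of the images of the two summands under $Tl|_{\rho(p,t)}$.

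Next I would differentiate $l \circ \rho = \Sigma \circ k$ slotwise. Since $k$ is the inclusion, $k(x,s) = (j(x),s)$ with $j : \partial L \subset \partial M$; fixing the second slot gives $l \circ \rho(\cdot,t) = \Sigma(\cdot,t)\circ j$, and fixing the first gives $l \circ \rho(p,\cdot) = \Sigma(p,\cdot)$. The chain rule (evaluated at $p$ and at $t$ respectively, using $j(p)=p$) then yields
\[
Tl|_{\rho(p,t)}\bigl(T\rho(\cdot,t)|_p(T_p\partial L)\bigr) = T\Sigma(\cdot,t)|_p\bigl(Tj|_p(T_p\partial L)\bigr),
\]
\[
Tl|_{\rho(p,t)}\bigl(T\rho(p,\cdot)|_t(T_t[0,1))\bigr) = T\Sigma(p,\cdot)|_t(T_t[0,1)).
\]
Invoking Lemma \ref{boundaryint} to replace $Tj|_p(T_p\partial L)$ by $(\jmath^*D)_p$ and substituting both displays into the decomposition of $D_{\Sigma(p,t)}$ from the previous paragraph turns it into
\[
D_{\Sigma(p,t)} = T\Sigma(\cdot,t)|_p((\jmath^*D)_p) + T\Sigma(p,\cdot)|_t(T_t[0,1)),
\]
which is exactly the $D$-collar condition; since $(p,t)$ was arbitrary this completes the argument.

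The substantive work has all been front-loaded into Lemma \ref{intecollar} and Lemma \ref{boundaryint}, so I do not anticipate a genuine obstacle here. The only points requiring care are bookkeeping the identification of $p$ as a point of both $\partial M$ and $\partial L$, and ensuring the slotwise differentiations of $l \circ \rho = \Sigma \circ k$ are taken at the correct base points; once the $\Sigma$-compatible collar $\rho$ is available, the identity follows from the chain rule together with the integral-manifold property of $L$.
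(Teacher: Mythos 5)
Your proof is correct and follows essentially the same route as the paper's: both fix $(p,t)$, invoke Lemma \ref{intecollar} to obtain $L$ and $\rho$ with $l\circ\rho = \Sigma\circ k$, use the integral-manifold property $Tl|_{\rho(p,t)}(T_{\rho(p,t)}L) = D_{\Sigma(p,t)}$, and identify $Tj|_p(T_p\partial L) = (\jmath^*D)_p$ via Lemma \ref{boundaryint}. The only difference is bookkeeping: you differentiate the relation slotwise (fixing each factor), whereas the paper pushes the tangent space forward through $Tk$ and recovers the splitting with the projections $\pi_1,\pi_2$ and $\tau_1,\tau_2$ --- the two computations are equivalent.
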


\begin{proof}
Let $(p,t) \in \partial M \times [0,1)$. Then, take a neat integral manifold with boundary $L$ with $p \in L$ as in Lemma \ref{intecollar} and inherit the notation from there. Since $\rho$ is an embedding, the relation $l\circ \rho = \Sigma \circ k$ gives us,
\begin{equation*}
\begin{split}
D_{\Sigma(p,t)} &= D_{\rho(p,t)}\\
&= Tl|_{\rho(p,t)}(T_{\rho(p,t)}L)\\
&= Tl|_{\rho(p,t)}(T\rho|_{(p,t)}(T_{(p,t)}(\partial L \times [0,1))))\\
&= T\Sigma|_{(p,t)}(Tk|_{(p,t)}(T_{(p,t)}(\partial L \times [0,1)))).
\end{split}
\end{equation*}

Now, considering the projections $\pi_1,\pi_2$ of $\partial M \times [0,1)$ (onto the indicated factors) and likewise $\tau_1,\tau_2$ on $\partial L \times [0,1)$, with $j : \partial L \subset \partial M$, we obtain the following linear isomorphisms and relations. 
\begin{gather*}
\left(T\pi_1|_{(p,t)}, T\pi_2|_{(p,t)}\right) : T_{(p,t)}(\partial M \times [0,1)) \to T_p\partial M \times T_t[0,1)\\
\left(T\tau_1|_{(p,t)}, T\tau_2|_{(p,t)}\right) : T_{(p,t)}(\partial L \times [0,1)) \to T_p\partial L \times T_t[0,1)\\
\pi_1 \circ k = j \circ \tau_1\\
\pi_2 \circ k = \tau_2.
\end{gather*}

With this, applying Lemma \ref{boundaryint} and a similar computation to the above, we get, 
\begin{gather*}
T\pi_1|_{(p,t)}(Tk|_{(p,t)}(T_{(p,t)}(\partial L \times [0,1)))) = Tj|_p(T_p\partial L) = (\jmath^*D)_p\\
T\pi_2|_{(p,t)}(Tk|_{(p,t)}(T_{(p,t)}(\partial L \times [0,1)))) = T_t[0,1).
\end{gather*}

Using again the fact that $\left(T\pi_1|_{(p,t)}, T\pi_2|_{(p,t)}\right)$ is a linear isomorphism, we see that for any subset  $A \subset T_{(p,t)}(\partial M \times [0,1))$,
\begin{equation*}
A = T\text{Id}(\cdot,t)|_p(T\pi_1|_{(p,t)}(A)) + T\text{Id}(p,\cdot)|_t(T\pi_2|_{(p,t)}(A)).
\end{equation*}

Then, from the above computations, it easily follows that,
\begin{equation*}
T\Sigma(\cdot,t)|_p((\jmath^*D)_p) + T\Sigma(p,\cdot )|_t(T_t[0,1)) = D_{\Sigma(p,t)}.
\end{equation*}
\end{proof}

We summarise our observations.

\begin{corollary}\label{S1 implies S3 in Normal Stefan-Sussmann}
Let $M$ be a manifold with boundary and $D$ be a normal distribution of rank $r$ on $M$. Then we have the implication S1 $\Rightarrow$ S3 in Theorem \ref{Normal Stefan-Sussmann}.
\end{corollary}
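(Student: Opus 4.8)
The plan is to assemble the lemmas established throughout this subsection into the three assertions comprising S3, so the argument is essentially bookkeeping. Assuming S1, that $D$ is neatly integrable by boundaries, the integrability of $\imath^*D$ follows immediately from the lemma showing that the interior of any neat integral manifold through an interior point is an integral manifold of $\imath^*D$; since by S1 every interior point lies on some neat integral manifold, every interior point is covered.

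For the integrability of $\jmath^*D$, I would combine Proposition \ref{interior} (which forces a boundary point to lie in the boundary of its neat integral manifold $L$) with Lemma \ref{boundaryint} (which identifies $Tj|_p(T_p\partial L)$ with $(\jmath^*D)_p$); the connected component of $\partial L$ through the given boundary point then serves as the required integral manifold of $\jmath^*D$. That $D$ is collared follows directly from Lemma \ref{collarexist}, which asserts that the particular collar $\Sigma$ furnished by Proposition \ref{preprecollar} is in fact a $D$-collar. Combining the three conclusions yields S3.

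The only genuinely substantial input is the collar condition; the assembly here is formal by comparison. Verifying that $\Sigma$ is a $D$-collar required first constructing (Lemmas \ref{interiorexpand} and \ref{intecollar}) a neat integral manifold $L$ whose interior is a \emph{maximal} integral manifold of $\imath^*D$, and then producing a compatible collar $\rho$ on $L$ by gluing a boundary flowout near $\partial L$ to an interior flowout, with weak embeddedness (Proposition \ref{boundarylessweakembedded}) guaranteeing smoothness of the interior piece. That prior work is where the real obstacle lay; the present corollary merely records its consequences.
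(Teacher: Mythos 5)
Your proposal is correct and matches the paper's own treatment: the corollary is exactly the bookkeeping step you describe, assembling the unnamed interior-integrability lemma, the $\jmath^*D$-integrability lemma (via Proposition \ref{interior} and Lemma \ref{boundaryint}), and Lemma \ref{collarexist} into the three clauses of S3. You also correctly locate the substantial content in Lemmas \ref{interiorexpand}, \ref{intecollar} and \ref{collarexist}, which is precisely where the paper places it.
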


\subsection{Implication S2 $\Rightarrow$ S1}

For this implication, we require analysing the product $A \times [0,1)$ where $A$ is a manifold. The domain of a collar is what this analysis will be applied to.

\subsubsection{Product analysis}

Let $A$ be a smooth manifold and $D$ be a distribution of rank $r$ on $A \times B$, where either $B = [0,1)$ or $B = (0,1)$. Set $M = A \times B$ and $n = \dim M$. Denote by $\pi_1 : M \to A$ and $\pi_2 : M \to B$ the projections. 

The following argument uses many elements from the proof of Lemma 19.5 in Lee's book \cite{Lee}.

\begin{lemma}
Assume $B = [0,1)$. Let $p \in A$ such that $D$ has constant rank along the curve $\gamma_p = \text{\emph{Id}}(p,\cdot) : [0,1) \to M$. Then, setting $R = r(p,0)$, there exists a neighbourhood $U$ of $(p,0)$ in $M$ and local 1-forms $\omega_{R+1},...,\omega_{n} : U \to T^*M$ and an $\epsilon > 0$ such that $\gamma_p([0,\epsilon)) \subset U$ for which $D_{\gamma_p(t)} = \bigcap_{i = 1}^n\ker \omega_{R+i}|_{\gamma_p(t)}$ for $t \in [0,\epsilon)$.
\end{lemma}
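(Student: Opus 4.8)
The plan is to realise the required forms as part of a dual coframe to a local frame that is adapted to $D$ along the curve, using the constant-rank hypothesis only at the very end to upgrade an obvious inclusion to an equality.

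First I would choose $D$-sections $X_1,\dots,X_R$ whose values at $(p,0)$ form a basis of $D_{(p,0)}$; this is possible by the definition of a distribution, since $\dim D_{(p,0)} = R$ and every vector of $D_{(p,0)}$ is the value of some $D$-section. As $X_1|_{(p,0)},\dots,X_R|_{(p,0)}$ are linearly independent, the $X_i$ remain pointwise linearly independent on an open neighbourhood of $(p,0)$; after possibly shrinking, I would complete them to a smooth local frame $(X_1,\dots,X_R,Y_{R+1},\dots,Y_n)$ on an open set $U \ni (p,0)$, adjoining suitable coordinate vector fields exactly as in the proof of Lemma 19.5 of \cite{Lee}. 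Letting $(\theta^1,\dots,\theta^n)$ be the coframe on $U$ dual to this frame, I set $\omega_{R+i} = \theta^{R+i}$ for $i = 1,\dots,n-R$. By continuity of $\gamma_p$ and openness of $U$ there is an $\epsilon > 0$ with $\gamma_p([0,\epsilon)) \subset U$. It then remains to verify the kernel identity along the curve. Fix $t \in [0,\epsilon)$. Since the $X_i$ are $D$-sections, $\operatorname{span}(X_1|_{\gamma_p(t)},\dots,X_R|_{\gamma_p(t)}) \subset D_{\gamma_p(t)}$; the left-hand side has dimension $R$ by linear independence, while the right-hand side has dimension $r(\gamma_p(t)) = R$ precisely because $D$ has constant rank $R = r(p,0)$ along $\gamma_p$, so the inclusion is an equality. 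By the defining property of a dual coframe, a tangent vector at $\gamma_p(t)$ lies in $\bigcap_{i=1}^{n-R}\ker \omega_{R+i}|_{\gamma_p(t)}$ if and only if its $Y_{R+1},\dots,Y_n$ components vanish, i.e. if and only if it lies in $\operatorname{span}(X_1|_{\gamma_p(t)},\dots,X_R|_{\gamma_p(t)})$. Combining the two yields $D_{\gamma_p(t)} = \bigcap_{i=1}^{n-R}\ker \omega_{R+i}|_{\gamma_p(t)}$, as required.

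I expect no genuine obstacle here, only one conceptual subtlety worth flagging: away from the curve the rank of $D$ may strictly exceed $R$, so the frame $(X_1,\dots,X_R,Y_{R+1},\dots,Y_n)$ is adapted to $D$ only along $\gamma_p$, and the forms $\omega_{R+i}$ annihilate $D$ only on the curve rather than on all of $U$. The constant-rank assumption along $\gamma_p$ is exactly what promotes the trivial inclusion $\operatorname{span}(X_i) \subset D$ to an equality at each point of $\gamma_p([0,\epsilon))$; no involutivity or flow argument enters this particular lemma. That $(p,0)$ is a boundary point of $M = A\times[0,1)$ is immaterial, since the choice of $D$-sections, the completion to a frame, and the passage to the dual coframe are all purely local constructions valid at boundary points.
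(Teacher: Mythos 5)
Your proof is correct and follows essentially the same route as the paper's: extend a basis of $D_{(p,0)}$ to $D$-sections, use their linear independence near $(p,0)$ to complete to a local frame as in Lemma 19.5 of Lee, take the dual coframe, and invoke the constant-rank hypothesis along $\gamma_p$ to upgrade the inclusion $\operatorname{span}(X_1,\dots,X_R) \subset D$ to an equality at points of the curve. Your version is if anything slightly cleaner, since you write the intersection as $\bigcap_{i=1}^{n-R}$ (correcting the statement's indexing slip of $\bigcap_{i=1}^{n}$) and explicitly note that the forms annihilate $D$ only along the curve, not on all of $U$.
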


\begin{proof}
Let $v_1,...,v_R$ be a basis for $D_{(p,0)}$. Then, extend each $v_i$ to a $D$-section $X_i$. Then, the subset $V \subset M$ for which $X_1,...,X_R$ are linearly independent is an open neighbourhood of $(p,0)$. So, $X_1|_{V},...,X_R|_{V}$ are linearly independent local vector fields on $M$. Hence, in a neighbourhood $U$ of $(p,0)$, there exist local vector fields $Y_{R+1},...,Y_n : U \to TM$ such that $(X_1|_{U},...,X_R|_{U},Y_{R+1},...,Y_n)$ forms a local frame for $TM$ on $U$. Then, consider the local co-frame $(\omega_1,...\omega_R,\omega_{R+1},...,\omega_n)$ of $T^*M$ on $U$ dual to $(X_1|_{U},...,X_R|_{U},Y_{R+1},...,Y_n)$. 

For $q \in U$, we get,
\begin{equation*}
\bigcap_{i = 1}^n\ker \omega_{R+i}|_q  = \text{span}(X_1|_q,...,X_R|_q) \subset D_q.
\end{equation*}

Since $U$ is a neighbourhood of $(p,0)$, there is an $\epsilon > 0$ with $\gamma_p([0,\epsilon)) \subset U$. For $t \in [0,\epsilon)$, we hence get, $\bigcap_{i = 1}^n\ker \omega_{R+i}|_{\gamma_p(t)} \subset D_{\gamma_p(t)}$. Moreover, $\bigcap_{i = 1}^n\ker \omega_{R+i}|_{\gamma_p(t)}$ and $D_{\gamma_p(t)}$ both have dimension $R$. Hence, we must have that $D_{\gamma_p(t)} = \bigcap_{i = 1}^n\ker \omega_{R+i}|_{\gamma_p(t)}$.
\end{proof}

Hence, we get the following uniqueness result.

\begin{corollary}\label{interior determines dist}
Assume $B = [0,1)$. Assume that $E$ is a distribution on $M$ such that $D_{(p,t)} = E_{(p,t)}$ for $(p,t) \in \interior M$ and for each for $p \in A$, $D$ and $E$ have constant rank along the curves $\gamma_p = \text{\emph{Id}}(p,\cdot) : [0,1) \to M$ for $p \in A$. Then, $D = E$.
\end{corollary}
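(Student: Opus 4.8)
The plan is to use the preceding Lemma as the essential tool and to argue by a limit at the boundary. Since $D$ and $E$ already agree on $\interior M = A \times (0,1)$ by hypothesis, it suffices to prove that $D_{(p,0)} = E_{(p,0)}$ for every $p \in A$. Fix such a $p$. Because $D$ and $E$ coincide along the interior of the curve $\gamma_p$, their common constant rank along $\gamma_p$ is the same integer, say $R = r(p,0)$: indeed for $t > 0$ one has $\dim D_{(p,t)} = \dim E_{(p,t)}$, and constant rank propagates this value to $t=0$, so $\dim D_{(p,0)} = \dim E_{(p,0)} = R$.

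Next I would apply the preceding Lemma to $D$ at the point $(p,0)$, obtaining a neighbourhood $U$ of $(p,0)$, local $1$-forms $\omega_{R+1},\dots,\omega_n$ on $U$, and an $\epsilon > 0$ with $\gamma_p([0,\epsilon)) \subset U$ such that $D_{\gamma_p(t)} = \bigcap_{i}\ker \omega_i|_{\gamma_p(t)}$ for all $t \in [0,\epsilon)$ --- crucially including the boundary value $t=0$. To show $E_{(p,0)} \subseteq D_{(p,0)}$, I would take any $v \in E_{(p,0)}$ and, using that $E$ is a distribution, extend $v$ to an $E$-section $W$ with $W|_{(p,0)} = v$. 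For each $t \in (0,\epsilon)$ the point $\gamma_p(t)$ lies in $\interior M$, so $W|_{\gamma_p(t)} \in E_{\gamma_p(t)} = D_{\gamma_p(t)}$ and hence $\omega_i|_{\gamma_p(t)}(W|_{\gamma_p(t)}) = 0$ for every $i$. Since the functions $t \mapsto \omega_i|_{\gamma_p(t)}(W|_{\gamma_p(t)})$ are smooth, letting $t \to 0^+$ yields $\omega_i|_{(p,0)}(v) = 0$ for every $i$, that is $v \in \bigcap_{i}\ker \omega_i|_{(p,0)} = D_{(p,0)}$. Thus $E_{(p,0)} \subseteq D_{(p,0)}$, and since both spaces have dimension $R$ they are equal (one could instead obtain the reverse inclusion symmetrically by applying the Lemma to $E$). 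As $p \in A$ was arbitrary and $D = E$ on $\interior M$, this gives $D = E$.

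The step I expect to be the main obstacle --- and the one the constant-rank hypothesis exists to handle --- is the passage to the limit $t \to 0^+$. A general distribution need not vary continuously near a boundary point at which its rank jumps off the curve, so the interior equality $D_{\gamma_p(t)} = E_{\gamma_p(t)}$ would not by itself propagate to $t = 0$. The preceding Lemma is precisely what removes this difficulty: constant rank along $\gamma_p$ lets one cut out $D$ along the curve, up to and including the boundary, by the kernels of the smooth coframe $1$-forms $\omega_i$, and smoothness of these forms together with smoothness of the section $W$ is exactly what legitimises taking the limit.
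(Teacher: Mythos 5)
Your proof is correct and follows exactly the route the paper intends: the corollary is stated as an immediate consequence of the preceding lemma, and your argument---agreement on $\interior M$ plus constant rank fixing the common dimension $R$, then cutting out $D_{\gamma_p(t)}$ up to $t=0$ by the smooth coframe forms and passing to the limit $t \to 0^+$ along an $E$-section through $v$---is precisely the filled-in version of that implicit derivation. No gaps; the dimension count at the end correctly upgrades the inclusion $E_{(p,0)} \subseteq D_{(p,0)}$ to equality.
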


For what follows, denote $\iota_t = \text{Id}(\cdot,t) : A \to M$ for $t \in B$ and recall that,
\begin{equation*}
(T\pi_1|_{(p,t)},T\pi_2|_{(p,t)}) : T_{(p,t)}(A \times B) \to T_{p}A \times T_{t}B,    
\end{equation*}
is a linear isomorphism.

\begin{lemma}\label{pullback dist is dist}
Assume $B = [0,1)$. Let $D_1$ be a distribution on $A$ of rank $r_1$. Then, $E = \pi_1^*D_1$ is a distribution of rank $r_1 \circ \pi_1 + 1$ on $M$. If $D_1$ is integrable, then $E$ is neatly integrable by boundaries.
\end{lemma}

\begin{proof}
For $(p,t) \in M$, we have $E_{(p,t)} = (T\pi_1|_{(p,t)})^{-1}((D_1)_p)$ so that, since $(D_1)_p$ is a vector subspace of $T_pA$, $E_{(p,t)}$ is a vector subspace of $T_{(p,t)}M$. Since $(T\pi_1|_{(p,t)},T\pi_2|_{(p,t)})$ is a linear isormorphism, we have that $T\pi_1|_{(p,t)}(E_{(p,t)}) = (D_1)_p$ and $T\pi_2|_{(p,t)}(E_{(p,t)}) = T_t[0,1)$, so $(T\pi_1|_{(p,t)},T\pi_2|_{(p,t)})(E_{(p,t)}) = (D_1)_p \times T_t[0,1)$ and hence $E_{(p,t)}$ has dimension $\dim{(D_1)_p} + 1 = (r_1\circ \pi_1 + 1)(p)$.

Now, let $u \in E_{(p,t)}$. Since $u \in T_{(p,t)}M$, setting $v = T\pi_1|_{(p,t)}(u)$, we have, for some $\alpha \in \mathbb{R}$,
\begin{equation*}
u = T\iota_t|_p(v) + \alpha \frac{\partial}{\partial t}|_{(p,t)} = T\pi_1|_{(p,t)}(u) + \alpha \frac{\partial}{\partial t}|_{(p,t)},    
\end{equation*}
since $(T\pi_1|_{(p,t)},T\pi_2|_{(p,t)})$ is a linear isomorphism, $T_t[0,1)$ is 1-dimensional and $T\pi_2|_{(p,t)}\left(\frac{\partial}{\partial t}|_{(p,t)} \right) = 0$.

Moreover, $v = T\pi_1|_{(p,t)}(u) \in (D_1)_p$. Hence, there is a $D_1$-section $W$ with $W|_p = v$. Then, the map $X : M \to TM$ defined by $X(p,t) = T\iota_t|_p(W_p)$, is a vector field on $M$. Then, consider the vector field $Y = X + \alpha \frac{\partial}{\partial t}$. 

Clearly $Y|_{(p,t)} = u$ and for $(p',t') \in M$, $T\pi_1|_{(p',t')}(X|_{(p',t')}) = V_{p'} \in (D_1)_{p'}$ and hence $X|_{(p',t')} \in (\pi_1^*D_1)_{(p',t')}$. So, $Y$ is an $E$-section with $Y|_{(p,t)} = u$.

Now assume that $D_1$ is integrable. Then, let $(p,t) \in M$. Then, take a integral manifold $I$ of $D_1$ containing $p$. Then, $I \times [0,1)$ is a connected submanifold of $M$ with boundary whereby $\partial(I \times [0,1)) \subset \partial M$.

Now, let $(p',t') \in I \times [0,1)$. Denoting by $\tau_1 : I \times [0,1) \to I$ the projection onto the first factor, we have $\iota \circ \tau_1 = \pi_1 \circ \iota'$ where $\iota : I \subset A$ and $\iota' : I \times [0,1) \subset M$ denote inclusions. Combined with the fact that $I$ is an integral manifold of $D_1$, we get,
\begin{equation*}
T\pi_1|_{(p',t')}(T\iota'|_{p',t')}(T_{(p',t')}(I \times [0,1)))) = T\iota|_{p'}(T_{p'}I) = (D_1)_{p'}.
\end{equation*}

Thus, $T\iota'(T_{(p',t')}(I \times [0,1))) \subset (T\pi_1|_{(p,t)})^{-1}((D_1)_{p'}) = E_{(p',t')}$. Moreover, since $(T\iota'|_{p',t')}$ is an immersion,
\begin{equation*}
\dim{T\iota'|_{p',t')}(T_{(p',t')}(I \times [0,1))} = \dim{T_{(p',t')}(I \times [0,1)} = \dim{I} + 1 = \dim{(D_1)_{p'}} + 1 = \dim E_{(p',t')}.
\end{equation*}

Hence, we must have $T\iota'(T_{(p',t')}(I \times [0,1))) = (T\pi_1|_{(p,t)})^{-1}((D_1)_{p'}) = E_{(p',t')}$. Hence, $I \times [0,1)$ is a neat integral manifold with boundary containing $(p,t)$.
\end{proof}

\begin{lemma}\label{exists pullback dist}
Assume $B =  (0,1)$. Assume $D$ is integrable and that $\frac{\partial}{\partial t}$ is a $D$-section. Then, there exists an integrable distribution $D_1$ on $A$ such that $D = \pi_1^*D_1$.
\end{lemma}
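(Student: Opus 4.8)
The plan is to show that $D$ is invariant under translation in the $t$-direction, to define $D_1$ as the resulting ``$t$-independent part'' of $D$ on $A$, and then to produce integral manifolds of $D_1$ by slicing integral manifolds of $D$ transversally to $\frac{\partial}{\partial t}$. For the invariance, I would use that $\frac{\partial}{\partial t}$ is a global $D$-section whose flow on $M = A \times (0,1)$ is the translation $\psi_s(p,t) = (p,t+s)$. Since $M$ is boundaryless and $D$ is integrable, the Stefan-Sussmann Theorem (Theorem \ref{Stefan-Sussmann}) shows $D$ is homogeneous, so $T\psi_s|_{(p,t)}(D_{(p,t)}) = D_{(p,t+s)}$ wherever defined. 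Under the isomorphism $(T\pi_1|_{(p,t)},T\pi_2|_{(p,t)})$ one checks that $T\psi_s$ is the identity $T_pA \times \mathbb{R} \to T_pA \times \mathbb{R}$, so the image of $D_{(p,t)}$ in $T_pA \times \mathbb{R}$ is independent of $t$; and since $\frac{\partial}{\partial t} \in D_{(p,t)}$ this image also contains $\{0\} \times \mathbb{R}$ and is therefore saturated in the second factor. Writing $(D_1)_p := T\pi_1|_{(p,t)}(D_{(p,t)}) \subset T_pA$ (independent of $t$), this says precisely that $D_{(p,t)} = (T\pi_1|_{(p,t)})^{-1}((D_1)_p)$ for every $(p,t)$.

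Next I would check that $D_1 = \bigcup_p (D_1)_p$ is a distribution on $A$ with $\pi_1^*D_1 = D$. Each $(D_1)_p$ is a subspace, and given $v \in (D_1)_p$ I can pick $u \in D_{(p,t_0)}$ with $T\pi_1(u) = v$, extend $u$ to a $D$-section $Y$, and set $X = T\pi_1 \circ Y \circ \iota_{t_0}$; this is a smooth $D_1$-section through $v$, so $D_1$ is a distribution. The equality $\pi_1^*D_1 = D$ is then immediate from the formula $D_{(p,t)} = (T\pi_1|_{(p,t)})^{-1}((D_1)_p)$ established above.

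For integrability of $D_1$, I would fix $p \in A$ and an integral manifold $N$ of $D$ through $(p,t_0)$. Since $\frac{\partial}{\partial t}$ is a $D$-section, it is tangent to $N$, so $\pi_2|_N : N \to (0,1)$ is a submersion and $S := (\pi_2|_N)^{-1}(t_0)$ is an embedded codimension-one submanifold of $N$ containing $(p,t_0)$; let $S_0$ be its connected component through $(p,t_0)$. As $S_0 \subset A \times \{t_0\}$ and $A \times \{t_0\}$ is embedded in $M$, the map $g = \pi_1|_{S_0} : S_0 \to A$ is an injective immersion, and I claim its image $I$ is an integral manifold of $D_1$ through $p$. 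The key computation is that at $x = (q,t_0) \in S_0$ the subspace $T_xS$, viewed inside $T_xM$, equals $D_x \cap \ker T\pi_2|_x$, which the saturation identifies with $(D_1)_q \times \{0\}$; its image under $T\pi_1$ is exactly $(D_1)_q$, giving $T(I \hookrightarrow A)(T_qI) = (D_1)_q$. Hence $D_1$ is integrable.

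The main obstacle I anticipate is this last paragraph: carefully managing the nested structures (the embedded slice $S$ inside the merely immersed $N$ inside $M$), the constant-rank bookkeeping ensuring $\dim S_0 = r_1(p)$, and verifying that $g$ is a genuine injective immersion onto an integral manifold. By contrast, the invariance step, though conceptually central, is technically light once homogeneity is invoked.
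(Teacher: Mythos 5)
Your proof is correct, but it follows a genuinely different route from the paper's. The paper defines $D_1$ as the slice pullback $\iota_{1/2}^*D$, where $\iota_{1/2} = \text{Id}(\cdot,1/2)$, and gets the distribution property for free from Proposition \ref{pullback dist to codim 1} (applied with $\omega = d(\pi_2|^{\mathbb{R}})$ and $X = \frac{\partial}{\partial t}$); it then establishes integrability of $D_1$ (and $D = \pi_1^*D_1$) through \emph{maximal} integral manifolds: by Proposition \ref{max contain int curves}, every integral curve $\text{Id}(p,\cdot)$ of $\frac{\partial}{\partial t}$ meeting a maximal integral manifold $I$ lies inside it, whence $\text{Pts}(I) = \pi_1(I) \times (0,1)$, and the preimage theorem upgrades this to a splitting $I = I_1 \times (0,1)$ with $I_1$ a connected submanifold of $A$. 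Your substitute for all of this is homogeneity (condition 3 of Theorem \ref{Stefan-Sussmann}): since the flow of $\frac{\partial}{\partial t}$ is $t$-translation, homogeneity together with $\frac{\partial}{\partial t}$ being a $D$-section yields the saturation identity $D_{(p,t)} = (T\pi_1|_{(p,t)})^{-1}((D_1)_p)$ outright, which makes $D = \pi_1^*D_1$ immediate; you then verify the section axiom for $D_1$ by hand (projecting $D$-sections along a slice) and produce integral manifolds of $D_1$ by slicing an arbitrary, not necessarily maximal, integral manifold of $D$ with the preimage theorem and projecting to $A$. What each approach buys: the paper's reuses machinery it has already built (Propositions \ref{pullback dist to codim 1}, \ref{max contain int curves} and Theorem \ref{boundarylessmaximal}) and delivers a stronger global structural fact (maximal integral manifolds are products), from which both conclusions fall out simultaneously; yours needs only existence of integral manifolds plus homogeneity, isolates the linear-algebraic content of the product structure, and keeps the integrability step local, at the cost of redoing the distribution-axiom check that Proposition \ref{pullback dist to codim 1} would have supplied. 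Your flagged obstacle in the last step is not a real one: inside $N$ the preimage theorem gives $Tn|_x(T_xS) = D_x \cap \ker T\pi_2|_x$ ($n : N \subset M$), which the saturation identifies with $(D_1)_q \times \{0\}$, and $T\pi_1$ carries this isomorphically onto $(D_1)_q$, exactly as you claim; the dimension bookkeeping is then automatic.
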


\begin{proof}
Consider, for example, the embedding $\iota_{1/2} = \text{Id}(\cdot,1/2) : A \to M$ and the pull-back $D_1 = \iota_{1/2}^*D$. Indeed, with $f = \pi_2|^{\mathbb{R}}$, we get the 1-form $\omega = df$ on $M$. Moreover, we have that $\iota_{1/2}^*\omega = 0$ and $\omega\left( \frac{\partial}{\partial t}\right) = 1$. Hence, by Proposition \ref{pullback dist to codim 1}, we have that $D_1$ is a distribution on $A$ of rank $r\circ \iota_{1/2} -1$.

Now, let $I$ be a maximal integral manifold of $D$. Then, let $p \in \pi_1(I)$ so that $(p,t) \in I$ for some $t \in (0,1)$. Then, $\gamma_p  = \text{Id}(p,\cdot)$ is an integral curve of $\frac{\partial}{\partial t}$ whereby $\gamma_p \cap I \neq \emptyset$. Hence, by Proposition \ref{max contain int curves}, $\gamma_p \subset I$. Hence, $\{p\} \times (0,1) \subset I$. Hence, $\text{Pts}(I) = \pi_1(I) \times (0,1)$. Since $\pi_2|_I$ is a submersion, by the Preimage Theorem, there exists a smooth structure on $\pi_1(I)$, which we denote by $I_1$, such that $I_1$ is a connected submanifold of $A$ whereby $I = I_1 \times (0,1)$. From this, it follows that $D_1$ is integrable.
\end{proof}

\begin{lemma}\label{finale for product analysis}
Assume $B = [0,1)$. Assume that $\imath^*D$ is integrable (where $\imath : \interior M \subset M$ is the inclusion), $\frac{\partial}{\partial t}$ is a $D$-section and that $D$ is of constant rank along the curves $\gamma_p = \text{Id}(p,\cdot) : [0,1) \to N \times [0,1)$ for $p \in N$. Then, $D$ is neatly integrable by boundaries.
\end{lemma}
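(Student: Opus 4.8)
The plan is to recognise $D$ as a pullback distribution of the form $\pi_1^*D_1$ and then import the neat integrability already established for such pullbacks in Lemma \ref{pullback dist is dist}. The preceding product-analysis lemmas have been arranged precisely so that this final statement becomes an assembly of them.

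First I would work on the interior. Since $\interior M = A \times (0,1)$ is of the form $A \times B$ with $B = (0,1)$, and since $\frac{\partial}{\partial t}$ restricts to an $\imath^*D$-section there (it is a $D$-section tangent to the interior, and $\imath$ is an open inclusion, so it is related to itself), the hypotheses of Lemma \ref{exists pullback dist} are met for the integrable distribution $\imath^*D$. This produces an integrable distribution $D_1$ on $A$, of some rank $r_1$, together with the identity $\imath^*D = \pi_1^*D_1$ on $A \times (0,1)$, where $\pi_1$ denotes the projection onto $A$.

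Next I would promote this to all of $M$. Applying Lemma \ref{pullback dist is dist} with $B = [0,1)$ to $D_1$, the subset $E = \pi_1^*D_1 \subset TM$ is a distribution of rank $r_1 \circ \pi_1 + 1$ on $M$, and because $D_1$ is integrable, $E$ is neatly integrable by boundaries. It then remains only to prove $D = E$, for which I would invoke Corollary \ref{interior determines dist}. Its three hypotheses are checked directly: on the interior, $E$ agrees with $\imath^*D$ by construction of $D_1$, while $\imath^*D$ agrees with $D$ because $\imath$ is an open inclusion (the tangent map is a fibrewise isomorphism), so $D_{(p,t)} = E_{(p,t)}$ for all $(p,t) \in \interior M$; the distribution $D$ has constant rank along each $\gamma_p$ by hypothesis; and $E$ has the constant rank $r_1(p) + 1$ along $\gamma_p$ since $\pi_1 \circ \gamma_p \equiv p$. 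Corollary \ref{interior determines dist} then yields $D = E$, so that $D$ inherits neat integrability by boundaries from $E$.

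The whole argument is a bookkeeping assembly of the earlier lemmas, so I do not expect a serious obstacle; the one point that genuinely requires care is the passage from the interior to the whole collar, that is, verifying that the distribution $E$ reconstructed from the purely interior data $D_1$ really recovers $D$ along the boundary slice $A \times \{0\}$. This is exactly where the constant-rank-along-$\gamma_p$ hypothesis is needed, and it is discharged by Corollary \ref{interior determines dist}; one should also take minor care that the projection $\pi_1$ used on the interior in Lemma \ref{exists pullback dist} is the restriction of the projection used on all of $M$ in Lemma \ref{pullback dist is dist}, so that the interior identity $\imath^*D = \pi_1^*D_1$ and the global definition $E = \pi_1^*D_1$ are compatible.
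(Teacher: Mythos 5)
Your proposal is correct and follows essentially the same route as the paper's proof: apply Lemma \ref{exists pullback dist} on the interior to obtain $D_1$ with $\imath^*D = \pi_1^*D_1$, apply Lemma \ref{pullback dist is dist} to get that $E = \pi_1^*D_1$ is neatly integrable by boundaries, and conclude $D = E$ via Corollary \ref{interior determines dist}. Your extra care in checking the hypotheses of the corollary (the fibrewise identification of $\imath^*D$ with $D$ on the interior, the constant rank of $E$ along each $\gamma_p$, and the compatibility of the two projections) only makes explicit steps the paper leaves implicit.
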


\begin{proof}
First, since $\imath^*D$ is integrable and $\frac{\partial}{\partial t}$ is a $D$-section, it follows from Lemma \ref{exists pullback dist} that there exists an integrable distribution $D_1$ on $N$ such that $\imath^*D = \tilde{\pi}_1^*D_1 = (\pi_1\circ \imath)^*D_1$, where $\tilde{\pi}_1 : \interior M \to A \times (0,1)$ is projection onto the first factor. 

Then, consider $E = \pi_1^*(D_1)$. We have from Lemma \ref{pullback dist is dist} that $E$ is a neatly integrable distribution of rank $r_1 \circ \pi_1 + 1$ (where $r_1$ is the rank of $D_1$). Hence, $E$ is of constant rank along the curves $\gamma_p = \text{Id}(p,\cdot) : [0,1) \to M$. Moreover, we see that $\imath^*E = \imath^*(\pi_1^*D_1) = (\pi_1\circ \imath)^*D_1 = \imath^*D$ and hence $D_{(p,t)} = E_{(p,t)}$ for all $(p,t) \in \interior M$.

Hence, by Corollary \ref{interior determines dist}, we must have that $D=E$ and hence, $D$ is neatly integrable by boundaries.
\end{proof}

With this, we derive implication $S2 \Rightarrow S1$ in Theorem \ref{Normal Stefan-Sussmann}.

\subsubsection{Application of the product analysis}

\begin{lemma}\label{S2 implies S1 in Normal Stefan-Sussmann}
Let $M$ be a manifold with boundary $\partial M \neq \emptyset$ and $D$ be a normal distribution of rank $r$ on $M$. Let $\imath : \interior M \subset M$ and $\jmath : \partial M \subset M$ denote the inclusions. Then we have the implication $S2 \Rightarrow S1$ in Theorem \ref{Normal Stefan-Sussmann}.
\end{lemma}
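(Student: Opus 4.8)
The plan is to split $M$ into its interior and a collar neighbourhood of the boundary: interior points are handled directly by the integrability of $\imath^*D$, while boundary points are handled by transporting the problem through the precollar to the product $\partial M \times [0,1)$, where Lemma~\ref{finale for product analysis} applies.

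First I would dispose of the interior points. Let $p \in \interior M$. Since $\imath^*D$ is integrable, an integral manifold $I$ of $\imath^*D$ passes through $p$. As $\imath : \interior M \subset M$ is an open inclusion, $T\imath$ restricts to the identity on each tangent space, so $(\imath^*D)_q = D_q$ for $q \in \interior M$; consequently $I$, viewed as a connected boundaryless submanifold of $M$, satisfies $T(\imath \circ i)|_q(T_qI) = D_q$ and is therefore an integral manifold with boundary of $D$ with $\partial I = \emptyset \subset \partial M$. Thus $I$ is a neat integral manifold of $D$ through $p$, and interior points need nothing beyond S2's first clause.

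For the boundary points I would use the precollar. Since $D$ is precollared, fix a $D$-precollar $\Sigma : \partial M \times [0,1) \to M$ with image $V$ (an open submanifold with boundary of $M$, with $\partial V = \partial M$, as in Remark~\ref{localproperty}) and diffeomorphic co-restriction $\sigma = \Sigma|^V$. Transport $D$ to the product by setting $\tilde D = \sigma^*D$; since $\sigma$ is a diffeomorphism, $\tilde D$ is a distribution on $\partial M \times [0,1)$ of rank $r \circ \sigma$. The key step is to verify that $\tilde D$ meets the three hypotheses of Lemma~\ref{finale for product analysis} with $A = \partial M$ and $B = [0,1)$: (i) the precollar identity $\Sigma(p,\cdot)'(t) \in D_{\Sigma(p,t)}$ says precisely that $T\sigma\left(\tfrac{\partial}{\partial t}\big|_{(p,t)}\right) \in D_{\sigma(p,t)}$, i.e.\ $\tfrac{\partial}{\partial t}$ is a $\tilde D$-section; (ii) the precollar identity $r(\Sigma(p,t)) = r(p)$ says the rank of $\tilde D$ is constant along each curve $\gamma_p = \text{Id}(p,\cdot)$; and (iii) since $\sigma$ restricts to a diffeomorphism of $\partial M \times (0,1)$ onto the open set $V \cap \interior M$, and the restriction of the integrable distribution $\imath^*D$ to this open set is again integrable (take components of the intersection of an integral manifold with the open set), its transport — which by functoriality of pullback is exactly $\imath'^*\tilde D$ for $\imath' : \partial M \times (0,1) \subset \partial M \times [0,1)$ — is integrable. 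Lemma~\ref{finale for product analysis} then gives that $\tilde D$ is neatly integrable by boundaries.

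Finally I would transport neat integral manifolds back. Given $p \in \partial M$, identify it with $(p,0)$ and take a neat integral manifold $\tilde L$ of $\tilde D$ through $(p,0)$. Because $\sigma$ is a diffeomorphism onto the open submanifold with boundary $V$, the image $\sigma(\tilde L)$ is an integral manifold with boundary of $D$ (again $(i_V^*D)_q = D_q$ for the open inclusion $i_V : V \subset M$), and $\partial(\sigma(\tilde L)) = \sigma(\partial \tilde L) \subset \sigma(\partial M \times \{0\}) = \partial M$, so $\sigma(\tilde L)$ is neat and passes through $p$. Combining the two cases shows a neat integral manifold passes through every point of $M$, which is S1. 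I expect the only genuine obstacle to be the bookkeeping in step (iii): confirming that the restriction of $\imath^*D$ to the open collar interior stays integrable and that its transport under $\sigma$ is literally $\imath'^*\tilde D$ rather than a merely isomorphic distribution; the verifications of (i) and (ii) are direct translations of the two precollar conditions, and the substantial analytic content has already been absorbed into Lemma~\ref{finale for product analysis}.
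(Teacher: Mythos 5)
Your proposal is correct and takes essentially the same approach as the paper: both transport $D$ through the precollar diffeomorphism $\sigma$ onto $\partial M \times [0,1)$, check that the two precollar conditions and the integrability of $\imath^*D$ yield exactly the three hypotheses of Lemma~\ref{finale for product analysis}, and then carry neat integral manifolds back through $\sigma$, with interior points handled directly by integrability of $\imath^*D$. The only difference is organizational --- the paper first restricts $D$ to the collar image $V$ before pulling back and compresses the interior-point case into its closing sentence, whereas you treat the interior case up front --- but the mathematical content is identical.
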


\begin{proof}
Assume that $\imath^*D$ is integrable and $D$ is precollared.

Let $\Sigma : \partial M \times [0,1) \to M$ be a $D$-precollar. Denote by $\sigma = \Sigma|^V$ the co-restriction diffeomorphism onto $V = \Sigma(\partial M \times [0,1))$. The inclusions we require are denoted by $\iota : V \subset M$, $\tilde{\imath} : \interior V \subset V$, $\tilde{\jmath} : \partial V \subset V$ and $\imath' : \partial M \times (0,1) \subset \partial M \times [0,1)$. Lastly, we set $\tilde{D} = \tilde{\imath}^*D$.

First, notice that since $\tilde{D} = \tilde{\imath}^*D$ is a distribution, it is clear that then $D' = \sigma^*\tilde{D}$ is a distribution on $\partial M \times [0,1)$ since $\sigma$ is a diffeomorphism. Moreover, since $\imath^*D$ is integable, it follows easily that $\tilde{\imath}^*\tilde{D}$ is integrable. Since again $\sigma$ is a diffeomorphism, it is clear then that $\iota'^*D'$ is integrable.

Now, since $\Sigma$ is a $D$-precollar, the vector field $\tilde{N} = T\sigma \circ \frac{\partial }{\partial t} \circ \sigma^{-1}$ is a $\tilde{D}$-section and $\tilde{D}$ has constant rank along the curves $\sigma(p,\cdot)$ for $p \in \partial V = \partial M$. Hence, $\frac{\partial}{\partial t}$ is a $D'$-section and $D'$ has constant rank along the curves $\gamma_p = \text{Id}(p,\cdot)$ for $p \in \partial M$.

Hence, altogether, by Lemma \ref{finale for product analysis}, $D'$ is neatly integrable by boundaries. Hence, since $\sigma$ is a diffeomorphism, it easily follows that $\tilde{D}$ is neatly integrable by boundaries. From this, since again $\imath^*D$ is integrable by assumption, we obtain altogether that $D$ is neatly integrable by boundaries.
\end{proof}

\subsection{Implications S3 $\Rightarrow$ S2, S3 $\Rightarrow$ S4 and S4 $\Rightarrow$ S1}

Let $M$ be a manifold with boundary $\partial M \neq \emptyset$ and $D$ be a normal distribution of rank $r$ on $M$. Let $\imath : \interior M \subset M$ and $\jmath : \partial M \subset M$ denote the inclusions.

\begin{remark}
Note that collared immediately implies pre-collared. That is, the desired implication $S3 \Rightarrow S4$ is trivial.
\end{remark}

\begin{lemma}
Assume that $D$ is collared and $\imath^*D$ and $\jmath^*D$ are integrable. For all $p \in M$, then, there exists a chart $(U,\varphi)$ neatly adapted to $D$ at $p$.
\end{lemma}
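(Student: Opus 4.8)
I would split the statement into the cases $p \in \interior M$ and $p \in \partial M$, the boundary case carrying all the work. For $p \in \interior M$, note that $(\imath^*D)_q = D_q$ at every interior $q$, so near $p$ the local $\imath^*D$-sections coincide with the local $D$-sections and the two ranks agree. As $\interior M$ is boundaryless and $\imath^*D$ is integrable, the standard Stefan--Sussmann Theorem (Theorem \ref{Stefan-Sussmann}) provides a chart adapted to $\imath^*D$ at $p$, whose \emph{first} $r(p)$ coordinate fields are $D$-sections and whose rank is constant along the slices fixing the last $n-r(p)$ coordinates. Composing with the coordinate permutation $(x_1,\dots,x_n)\mapsto(x_{r(p)+1},\dots,x_n,x_1,\dots,x_{r(p)})$ moves the generating directions to the last $r(p)$ slots and the slicing to the first $n-r(p)$ coordinates, yielding a chart neatly adapted to $D$ at $p$; condition 1 is untouched since $\varphi(U)=(-\epsilon,\epsilon)^n$.

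For $p \in \partial M$, I would thread a boundary adapted chart through a collar. Since $\jmath^*D$ is integrable on the boundaryless manifold $\partial M$, Theorem \ref{Stefan-Sussmann} yields a chart $(W,\chi)$ adapted to $\jmath^*D$ at $p$; after a permutation as above I may take $\chi(p)=0$, $\chi(W)=(-\delta,\delta)^{n-1}$, the last $r(p)-1$ coordinate fields to be $\jmath^*D$-sections (recall $\jmath^*D$ has rank $r(p)-1$ at $p$ by Corollary \ref{opening of Normal Steffan-Sussmann}), and the rank of $\jmath^*D$ constant along each slice $W_c=\chi^{-1}(\{c\}\times(-\delta,\delta)^{r(p)-1})$, $c\in(-\delta,\delta)^{n-r(p)}$. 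Fixing a $D$-collar $\Sigma$, I define $\Psi(y,t)=\Sigma(\chi^{-1}(y),t)$ on $(-\epsilon,\epsilon)^{n-1}\times[0,\epsilon)$ with $(-\epsilon,\epsilon)^{n-1}\subset\chi(W)$; as a composition of $\chi^{-1}$ in the first $n-1$ slots with the collar embedding, $\Psi$ is an embedding onto an open neighbourhood $U$ of $p$ in $M$, and I set $\varphi=\Psi^{-1}$. Condition 1 holds because $\Psi(0,0)=\Sigma(p,0)=p$ and the final coordinate is the collar parameter $t\ge 0$, so $\varphi(U)=(-\epsilon,\epsilon)^n\cap\mathbb{H}^n$. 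Condition 2 is immediate from the defining equation of a $D$-collar: writing $q=\chi^{-1}(y)$, the field $\partial_n$ is the collar velocity $T\Sigma(q,\cdot)|_t(\partial_t)\in T\Sigma(q,\cdot)|_t(T_t[0,1))\subset D_{\Sigma(q,t)}$, while for $n-r(p)+1\le i\le n-1$ one has $\partial_i=T\Sigma(\cdot,t)|_q(\partial^\chi_i|_q)$ with $\partial^\chi_i|_q\in(\jmath^*D)_q$, hence $\partial_i\in T\Sigma(\cdot,t)|_q((\jmath^*D)_q)\subset D_{\Sigma(q,t)}$; so $\partial_{n-r(p)+1},\dots,\partial_n$ are $D$-sections on all of $U$.

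The main obstacle is condition 3, which I would reduce to the claim that rank is constant along collar curves. Because $\Sigma$ is an embedding, $T\Sigma|_{(q,t)}$ is injective and sends the complementary summands $T_q\partial M\times\{0\}$ and $\{0\}\times T_t[0,1)$ to subspaces meeting only in $0$; hence the sum $T\Sigma(\cdot,t)|_q((\jmath^*D)_q)+T\Sigma(q,\cdot)|_t(T_t[0,1))$ is direct, of dimension $(r(q)-1)+1=r(q)$. The defining equation of a $D$-collar identifies this sum with $D_{\Sigma(q,t)}$, whence $r(\Sigma(q,t))=r(q)$ for every $(q,t)$. Since $U_c$ is exactly the set of points $\Sigma(q,t)$ with $q\in W_c$ and $t\in[0,\epsilon)$, and $r(q)=(\text{rank of }\jmath^*D\text{ at }q)+1$ is constant on $W_c$ by the choice of $\chi$, the rank $r$ is constant on $U_c$ (the case $r(p)=n$ being the same computation with $U_c=U$ and $r\equiv n$). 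Everything else is bookkeeping of indices together with the elementary fact that composing a chart with an embedding yields a chart; in particular, integrability of $\imath^*D$ is not needed for this implication.
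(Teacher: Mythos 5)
Your construction is correct and is essentially the paper's own proof: interior points are handled by applying the standard Stefan--Sussmann Theorem to $\imath^*D$ and permuting coordinates, and boundary points by transporting a chart of $\partial M$ adapted to $\jmath^*D$ through the $D$-collar via $(y,t)\mapsto\Sigma(\chi^{-1}(y),t)$. Your verification of conditions 1--3 (in particular the direct-sum argument giving $r(\Sigma(q,t))=r(q)$, hence rank constancy along the slices $U_c$) supplies exactly the details the paper leaves as ``easy to see''.

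The one thing you must retract is the closing claim that ``integrability of $\imath^*D$ is not needed for this implication''. It contradicts your own first paragraph, which invokes it, and it is false for the lemma as stated: the collar constrains $D$ only on a neighbourhood of $\partial M$, so the hypotheses ``$D$ collared and $\jmath^*D$ integrable'' say nothing about $D$ deep in the interior. Concretely, on $M=\mathbb{R}^2\times[0,\infty)$ with coordinates $(x,y,z)$, let $D$ be the pointwise span of the vector fields $\partial_z$, $\partial_x+\rho(z)\,z\,\partial_y$ and $(1-\rho(z))\,\partial_y$, where $\rho$ is smooth, $\rho=0$ for $z\le 1$ and $\rho=1$ for $z\ge 2$. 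This is a normal distribution; $\jmath^*D=T\partial M$ is integrable; and $\Sigma(p,t)=(p,t)$ is a $D$-collar, since $D=TM$ on its image $\{z<1\}$. Yet for $z>2$ one has $D=\ker(dy-z\,dx)$, spanned by $\partial_z$ and $\partial_x+z\,\partial_y$, which is non-involutive ($[\partial_z,\partial_x+z\,\partial_y]=\partial_y\notin D$); a neatly adapted chart at such a point would produce, via its central slice, a surface tangent to this contact structure, which does not exist. What is true --- and worth stating instead --- is that the \emph{boundary} case uses only $\jmath^*D$-integrability and the collar; the interior case genuinely needs $\imath^*D$ integrable.
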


\begin{proof}
Let $\Sigma : \partial M \times [0,1) \to M$ be a $D$-collar. Suppose $p \in \partial M$. Since $\jmath^*D$ is integrable, by Theorem \ref{Stefan-Sussmann}, there exists a chart $(U',\tilde{\varphi})$ on $\partial M$ adapted to $\jmath^*D$ at $p$. We may choose $(U',\tilde{\varphi})$ so that $\tilde{\varphi}(U') = (-\epsilon,\epsilon)^{n-1}$ for some $0 < \epsilon \leq 1$. Then, $(\varphi'^1,...,\varphi'^{n-1}) = (\tilde{\varphi}^{r(p)},...,\tilde{\varphi}^n,\tilde{\varphi}^1,...,\tilde{\varphi}^{r(p)-1})$ defines a new chart $(U',\varphi')$. Then, with the open set $V' = U'\times[0,\epsilon)$, setting $U = \Sigma(V')$ gives the chart $(U,\varphi)$ where $\varphi = (\varphi' \times \text{id}) \circ \sigma'^{-1}$ where we have set $\sigma' = \Sigma|_{V'}^{U}$ and $\text{id} : [0,\epsilon) \to [0,\epsilon)$. Due to the fact that $\Sigma$ is a $D$-collar, it is easy to see that $(U,\varphi)$ is neatly adapted to $D$.

Suppose instead that $p \in \interior M$. Then, from Theorem \ref{Stefan-Sussmann}, since $\imath^*D$ is integrable, there exists a chart $(U,\tilde{\varphi})$ of $\interior M$ (and hence $M$) adapted to $\imath^*D$. Then, $(\varphi^1,...,\varphi^n) = (\varphi'^{r(p)+1},...,\varphi'^{n},\varphi'^1,...,\varphi'^{r(p)})$ defines a new chart $(U,\varphi)$. It is easy to see that $(U,\varphi)$ is neatly adapted to $D$.
\end{proof}

The final implication is essentially the same as the proof presented in Ruldolf and Schmidt's book \cite{RudolphSchmidt} for the regular Stefan-Sussmann Theorem. 

\begin{lemma}
Assume that at every point $p \in M$, there exists a chart $(U,\varphi)$ neatly adapted to $D$ at $p$. Then $D$ is neatly integrable by boundaries.
\end{lemma}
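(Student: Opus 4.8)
The plan is to construct, for each $p \in M$, an explicit neat integral manifold with boundary through $p$ as a single coordinate slice of a neatly adapted chart; since $p$ is arbitrary, this yields neat integrability by boundaries. First I would fix $p$, write $R = r(p)$, and take a chart $(U,\varphi)$ neatly adapted to $D$ at $p$. Before anything else I would record that, by normality, $R \geq 1$ whenever $p \in \partial M$ (a vector in $D_p$ transverse to $\partial M$ is in particular nonzero), so that the boundary-defining last coordinate $\varphi^n$ is always among the ``leaf'' coordinates $\varphi^{n-R+1},\dots,\varphi^n$. This is exactly why the definition places the $D$-section directions last, and it is what will make the slice neat.

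Next I would define the slice through $p$ by freezing the first $n-R$ coordinates at $0$, namely $L = \varphi^{-1}\!\left(\{0\}\times(-\epsilon,\epsilon)^{R}\right)$, which is the set $U_0$ in the constant-rank condition of the definition. Since $\varphi(p)=0$ we have $p \in L$, and $L$ is connected because its coordinate image is convex. I would then identify $L$ as an embedded submanifold with boundary and check neatness by cases. If $p \in \interior M$, the chart is boundaryless, $L$ is an ordinary (boundaryless) submanifold, and $\partial L = \emptyset \subset \partial M$ trivially. If $p \in \partial M$, then $\varphi(U) = (-\epsilon,\epsilon)^n \cap \mathbb{H}^n$ imposes $\varphi^n \geq 0$ on the varying coordinates, making $L$ a manifold with boundary whose boundary is $\varphi^{-1}\!\left(\{0\}\times(-\epsilon,\epsilon)^{R-1}\times\{0\}\right) = L \cap \partial M \subset \partial M$; hence $L$ is neat.

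It then remains to verify that $L$ is integral, i.e. that $Tl|_q(T_qL) = D_q$ for every $q \in L$, where $l : L \subset M$. At each such $q$ the tangent space $T_qL$ is spanned by $\partial_{n-R+1}|_q,\dots,\partial_n|_q$, which by the second requirement in the definition are values of local $D$-sections and so lie in $D_q$; thus $Tl|_q(T_qL) \subseteq D_q$. For the reverse inclusion I would invoke the constant-rank requirement: $r$ is constant along $U_0 = L$ and equals its value $r(p) = R$ at $p$, so $\dim D_q = R = \dim T_qL$ for all $q \in L$, forcing equality. Therefore $L$ is a connected integral manifold with boundary of $D$ passing through $p$ with $\partial L \subset \partial M$, that is, a neat integral manifold; as $p$ was arbitrary, $D$ is neatly integrable by boundaries.

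The argument is essentially a direct reading of the definition of a neatly adapted chart, so I do not anticipate a deep obstacle; the one point requiring care is the boundary case, specifically the verification that $\partial L \subset \partial M$. This hinges on the deliberate placement of the $D$-section directions as the final coordinates together with the fact, coming from normality, that $R \geq 1$ on $\partial M$, which guarantees that the boundary coordinate $\varphi^n$ actually participates in the slice. It also uses that the preimage $U_0$ includes its boundary points (where $\varphi^n = 0$), so the constant-rank hypothesis persists up to $\partial M$ and keeps $\dim D_q = R$ there; I would make sure to state this explicitly so that the tangent-space equality holds on all of $L$, boundary included.
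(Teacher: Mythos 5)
Your proof is correct and takes essentially the same route as the paper: both construct the coordinate slice $L = \varphi^{-1}\left(\{0\}\times(-\epsilon,\epsilon)^{r(p)}\right)$ through $p$, identify it as an embedded submanifold with boundary satisfying $\partial L \subset \partial M$, and deduce that it is integral from the local $D$-sections $\partial_{n-r(p)+1},\dots,\partial_n$ together with the constant-rank condition along the slice. Your write-up simply fills in the details the paper leaves as ``easy'' (the case split on $p \in \partial M$ versus $p \in \interior M$, the identification of $\partial L$, the role of normality in forcing $r(p) \geq 1$ on the boundary, and the dimension count giving the reverse inclusion), which is a welcome expansion rather than a deviation.
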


\begin{proof}
Set $n = \dim M$. Let $p \in M$. Then, take a chart $(U,\varphi)$ neatly adapted to $D$ at $p$. The case $r(p) = n$ is trivial. Now suppose $r(p) < n$.

We have for some $\epsilon > 0$ that either $\varphi(U) = (-\epsilon,\epsilon)^n$ or $\varphi(U) = (-\epsilon,\epsilon)^n \cap \mathbb{H}^n$. Write $S$ for the structure on $\varphi^{-1}(\{0\} \times (-\epsilon,\epsilon)^{r(p)})$ as an embedded submanifold with boundary in $\mathbb{R}^n$ where $\partial S = S \cap \mathbb{H}^n$. Then, $L = (\varphi^{-1})(S)$ is an embedded submanifold with boundary in $M$ with $\partial L \subset \partial M$. Now, since $D$ is of constant rank on $L$ and $\partial_{n-r(p)+1},...,\partial_n$ are local $D$-sections, it follows easily that $L$ is an integral manifold with boundary of $D$. Altogether, $L$ is a neat integral manifold with boundary of $D$ where $p \in L$.
\end{proof}

We summarise the observations of this section.

\begin{corollary}\label{S3 implies S2, S3 implies S4 and S4 implies S1 in Normal Stefan-Sussmann}
Let $M$ be a manifold with boundary and $D$ be a normal distribution of rank $r$ on $M$. Then we have the implications $S3 \Rightarrow S2$, $S3 \Rightarrow S4$ and $S4 \Rightarrow S1$ in Theorem \ref{Normal Stefan-Sussmann}.
\end{corollary}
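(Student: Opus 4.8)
The three implications collected in this corollary are independent, so the plan is to establish each separately and then merely assemble them. The implication $S3 \Rightarrow S2$ is essentially formal: both S2 and S3 assume that $\imath^*D$ is integrable, so it suffices to show that every $D$-collar is a $D$-precollar. Reading off the $[0,1)$-factor of the defining $D$-collar identity gives $\Sigma(p,\cdot)'(t) = T\Sigma(p,\cdot)|_t(\partial_t) \in D_{\Sigma(p,t)}$ immediately. For the rank condition, I would observe that the two summands in the $D$-collar identity are in direct sum (the first lies in the tangent space to the slice $\Sigma(\partial M \times \{t\})$, the second is the transverse collar velocity, and $\Sigma$ is an embedding), so the left-hand side has dimension $(r(p)-1)+1 = r(p)$; since it equals $D_{\Sigma(p,t)}$, this forces $r(\Sigma(p,t)) = r(p)$.

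For $S3 \Rightarrow S4$ the plan is to produce a neatly adapted chart at each $p$ by separate constructions on the boundary and in the interior. If $p \in \interior M$, I would simply apply the standard Stefan-Sussmann Theorem (Theorem \ref{Stefan-Sussmann}) to the integrable distribution $\imath^*D$ to obtain an adapted chart, and then cyclically permute its coordinates so that the $D$-section directions occupy the last $r(p)$ slots, as the neatly adapted definition demands. If $p \in \partial M$, I would first obtain a chart on $\partial M$ adapted to the integrable distribution $\jmath^*D$ via Theorem \ref{Stefan-Sussmann}, reorder its coordinates, and then transport it into $M$ through the $D$-collar $\Sigma$, appending the collar parameter as the final coordinate. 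The defining $D$-collar identity is exactly the statement that the images of the boundary $\jmath^*D$-directions together with $\partial_t$ span $D_{\Sigma(p,t)}$, which is what is needed for the last $r(p)$ coordinate fields to be $D$-sections spanning $D$.

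For $S4 \Rightarrow S1$ the plan is to read an integral manifold straight off a neatly adapted chart. Given such a chart $(U,\varphi)$ at $p$, I would set $L = \varphi^{-1}(\{0\} \times (-\epsilon,\epsilon)^{r(p)})$, which carries the structure of an embedded submanifold with boundary (its boundary being the intersection with $\mathbb{H}^n$ in the boundary case) and satisfies $\partial L \subset \partial M$. Since the chart is neatly adapted, $D$ has constant rank $r(p)$ along $L$ and the fields $\partial_{n-r(p)+1},\dots,\partial_n$ are $D$-sections tangent to $L$; a dimension count then forces $T_qL = D_q$ for every $q \in L$, so $L$ is a neat integral manifold through $p$. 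The degenerate case $r(p)=n$ is handled by taking $L = U$ itself.

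The only genuinely substantive step is the boundary case of $S3 \Rightarrow S4$. The hard part will be verifying that, after transporting the boundary-adapted chart through the collar and adjoining the collar parameter, the resulting final $r(p)$ coordinate vector fields really are $D$-sections whose span is all of $D$ at each point. This is precisely where the full strength of the $D$-collar identity is indispensable: the weaker precollar condition appearing in S2 controls only the single transverse direction and would not suffice to reconstruct $D_{\Sigma(p,t)}$ from the boundary data $(\jmath^*D)_p$.
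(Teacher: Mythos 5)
Your proposal is correct and follows essentially the same route as the paper: S3 $\Rightarrow$ S2 by noting that every $D$-collar is a $D$-precollar (with the direct-sum dimension count giving the rank condition), S3 $\Rightarrow$ S4 by applying Theorem \ref{Stefan-Sussmann} to $\imath^*D$ in the interior case and to $\jmath^*D$ in the boundary case, reordering coordinates, and transporting the boundary chart through the $D$-collar with the collar parameter appended as the last coordinate, and S4 $\Rightarrow$ S1 by taking the slice $\varphi^{-1}(\{0\}\times(-\epsilon,\epsilon)^{r(p)})$ of a neatly adapted chart. The only difference is one of detail: the paper dismisses collared $\Rightarrow$ precollared in a one-line remark (mislabelled there as S3 $\Rightarrow$ S4), whereas you actually spell out the direct-sum argument that justifies it.
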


\subsection{Mutual equivalence of S1, S2, S3 and S4}

We need only to verify that we have collected sufficient implications for mutual equivalence.

\begin{proof}[Proof of Theorem \ref{Normal Stefan-Sussmann}] 
As mentioned, the required opening statement is Corollary \ref{opening of Normal Steffan-Sussmann}. In the following diagram, we label indicated implication by the result which established it.

\begin{center}
\begin{tikzpicture}[node distance=2.2cm,thick,
main node/.style={circle,draw,font=\large},
line/.style={double distance=2pt, -Implies,thick}
]
  \node[main node] (S1) {S1};
  \node[main node] (S2) [below right of=S1] {S2};
  \node[main node] (S3) [below left of=S2] {S3};
  \node[main node] (S4) [below left of=S1] {S4};

  \draw[line] (S1) to node[left] {\ref{S1 implies S3 in Normal Stefan-Sussmann}} (S3);
  \draw[line] (S3) to node[below right] {\ref{S3 implies S2, S3 implies S4 and S4 implies S1 in Normal Stefan-Sussmann}} (S2);
  \draw[line] (S2) to node[above right] {\ref{S2 implies S1 in Normal Stefan-Sussmann}} (S1);
  \draw[line] (S3) to node[below left] {\ref{S3 implies S2, S3 implies S4 and S4 implies S1 in Normal Stefan-Sussmann}} (S4);
  \draw[line] (S4) to node[above left] {\ref{S3 implies S2, S3 implies S4 and S4 implies S1 in Normal Stefan-Sussmann}} (S1);
\end{tikzpicture}
\end{center}

From this it is clear that S1, S2, S3 and S4 are all equivalent.
\end{proof}

\section{Acknowledgements}

The first author would like to acknowledge his supervisors and co-authors David Pfefferl{\'e} and Luchezar Stoyanov for the valuable and frequent discussions which occurred while this paper was being written. This paper was written while the first author received an Australian Government Research Training Program Scholarship at The University of Western Australia.

\bibliography{DistributionReferences}

\end{document}